\newtheorem{theorem}{Theorem}[section]
\newtheorem{lemma}[theorem]{Lemma}
\newtheorem{proposition}[theorem]{Proposition}
\theoremstyle{definition}
\newtheorem{definition}[theorem]{Definition}
\theoremstyle{remark}
\numberwithin{equation}{section}
\begin{document}

%
%
%
%
%
%
%
%
%

\title[{A note on connectedness of Blaschke products}]{A note on connectedness of Blaschke products}

\author[Yue Xin]{Yue Xin}
\address{School of Mathematics, Jilin University, 130012, Changchun, P. R. China}
\email{179929393@qq.com}
\author[Bingzhe Hou]{Bingzhe Hou}

\address{School of Mathematics, Jilin University, 130012, Changchun, P. R. China}
\email{houbz@jlu.edu.cn}

\subjclass{Primary 30J05, 30J10; Secondary 54C35.}

\keywords{Blaschke products; path-connectedness; pseudo-hyperbolic distance; interpolating; one-component.}

\begin{abstract}
Consider the space $\mathcal{F}$ of all inner functions on the unit open disk under the uniform topology, which is a metric topology induced by the $H^{\infty}$-norm. In the present paper, a class of Blaschke products, denoted by $\mathcal{H}_{SC}$, is introduced. We prove that for each $B\in\mathcal{H}_{SC}$, $B$ and $zB$ belong to the same path-connected component of $\mathcal{F}$. It plays an important role of a method to select a fine subsequence of zeros. As a byproduct, we obtain that each Blaschke product in $\mathcal{H}_{SC}$ has an interpolating and one-component factor.
\end{abstract}
\maketitle

\section{Introduction}

Let $\mathbb{D}$ be the unit open disk in the complex plane $\mathbb{C}$ and let $\partial\mathbb{D}$ be the boundary of $\mathbb{D}$, i.e., the unit circle.
The pseudo-hyperbolic distance on the unit open disk $\mathbb{D}$, denoted by $\rho$, is given by
\begin{equation*}
	\rho(z,w)={\big \vert}\frac{z-w}{1-\overline{z}w}{\big \vert}, \ \ \ \text{for any} \ z, w \in\mathbb{D}.
\end{equation*}
Let $H^{\infty}$ be the Banach algebra of bounded analytic functions on $\mathbb{D}$ equipped with the norm ${\Vert}f{\Vert}_{\infty}=\sup_{z\in\mathbb{D}}{\vert}f(z){\vert}$. A bounded analytic function $f$ on $\mathbb{D}$ is called an inner function if it has unimodular radial limits almost everywhere on the boundary $\partial\mathbb{D}$ of $\mathbb{D}$. Furthermore, denote by $\mathcal{F}$ the set of all inner functions. We are interested in the space $\mathcal{F}$ under uniform topology, which is a metric topology induced by the $H^{\infty}$-norm. Notice that the uniform topology on $\mathcal{F}$ is very complicated and interesting, see \cite{He74, Ne79, Ne80} for instance.

A Blaschke product is an inner function of the form
\begin{equation*}
	B(z)=\lambda z^{m}\prod_{n}\frac{{\vert}z_{n}{\vert}}{z_{n}}\frac{z_{n}-z}{1-\overline{z_{n}}z},
\end{equation*}
where $m$ is a nonnegative integer, $\lambda$ is a complex number with ${\vert}\lambda{\vert}=1$, and $\{z_{n}\}$ is a sequence of points in $\mathbb{D}\setminus\{0\}$ satisfying the Blaschke condition $\sum_{n}(1-{\vert}z_{n}{\vert})<\infty$. Moreover, if $\lambda=1$, we say that $B$ is normalized.

If for every bounded sequence of complex numbers $\{w_n\}_{n=1}^{\infty}$, there exists $f$ in $H^{\infty}$ satisfying $f(z_n)=w_n$ for every $n\in\mathbb{N}$, then both the sequence $\{z_n\}_{n=1}^{\infty}$ and the Blaschke product $B(z)$ are called interpolating. Following from a celebrated result of Carleson \cite{LC}, one can see
that $B(z)$ is an interpolating Blaschke product if and only if $\{z_{n}\}$ is a uniformly separated sequence, i.e.,
\begin{equation*}
	\inf_{n\in \mathbb{N}}\prod_{k\neq n}{\big \vert}\frac{z_{k}-z_{n}}{1-\overline{z_{k}}z_{n}}{\big \vert}>0.
\end{equation*}
Moreover, if
\begin{equation*}
	\lim_{n\rightarrow\infty}\prod_{k\neq n}{\big \vert}\frac{z_{k}-z_{n}}{1-\overline{z_{k}}z_{n}}{\big \vert}=1,
\end{equation*}
both the sequence $\{z_n\}_{n=1}^{\infty}$ and the Blaschke product $B(z)$ are called thin.
In addition, A Blaschke product is called Carleson-Newman if it is a product of finitely many interpolating Blaschke products.
Interpolating Blaschke products and Carleson-Newman Blaschke products play an important role in the study of $H^{\infty}$. As well-known, inner functions can be approximated uniformly by Blaschke products, and Carleson-Newman Blaschke products can be approximated uniformly by interpolating Blaschke products \cite{DEA}. However, there is still an open problem whether the set of all interpolation Blaschke products is dense in the inner function space $\mathcal{F}$. There has been obtained some related results. For instance, Marshall (\cite{DE}) proved that finite linear combinations of Blaschke products are dense in $H^{\infty}$; Nicolau and Su\'{a}rez \cite{NA} characterize the connected components of the subset $CN^{*}$ of $H^{\infty}$ formed by the products $bh$, where $b$ is a Carleson-Newman Blaschke product and $h\in H^{\infty}$ is an invertible function.

In particular, a result of K. Tse \cite{KF} tells us that a sequence $\{z_n\}_{n=1}^{\infty}$ of points contained in a Stolz domain
$$
\{z\in\mathbb{D}: {\vert}1-\overline{\xi}z{\vert}\le C(1-{\vert}z{\vert})\},
$$
where $\xi$ is a constant with ${\vert}\xi{\vert}=1$, is interpolating if and only if it is
separated, i.e.,
\begin{equation*}
	\inf_{m\neq n}\rho(z_{m}, z_{n})>0.
\end{equation*}

Then, if the zero points lie in a Stolz domain, we may say something more about the Blaschke products. For example,
A. Reijonen gave a sufficient condition for a Blaschke product with zeros in a Stolz domain to be a one-component inner function in \cite{Re19}. An inner function $u$ in $H^{\infty}$ is said to be one-component if there is $\eta\in(0,1)$ such that the level set $\Omega_{u}(\eta) :=\{z\in\mathbb{D}:{\vert}u(z){\vert}<\eta\}$ is connected. More details of one-component inner functions, we refer to \cite{AB, Jo, Re19}.

In this paper, we focus on the path-connected components of the space $\mathcal{F}$ under the uniform topology. Recall that the topology of the uniform convergence on the set $\mathcal{F}$ is induced by the following metric
\begin{equation*}
d(f,g)={\Vert}f-g{\Vert}_{\infty}=\sup_{z\in\mathbb{D}}{\vert}f(z)-g(z){\vert}=\sup_{\theta\in \mathbb{R}}{\rm ess}{\vert}f(e^{{\bf i}\theta})-g(e^{{\bf i}\theta}){\vert}.
\end{equation*}
For any two inner functions $f$ and $g$, if they belong to the same path-connected component in the space $\mathcal{F}$, we denote $f\sim g$.

The path of inner functions has always been of great interest and is related to many important issues.
D. Herrero \cite{He74} considered the path-connected components of the space $\mathcal{F}$. He showed that a component of $\mathcal{F}$ can contain nothing but Blaschke products with infinitely many zeroes, exactly one (up to a constant factor) singular inner function or infinitely many pairwise coprime singular inner functions, which answered a problem of Douglas. Furthermore, V. Nestoridis studied the invariant and noninvariant connected components of the inner functions space $\mathcal{F}$. He proved that the inner functions $d(z)={\rm exp} \{(z+1)/(z-1)\}$ and $zd$ belong to the same connected component \cite{Ne79}, and gave a family of inner functions, denoted by $H$, such that for every $B\in H$, $B$ and $zB$ don't belong to the same component \cite{Ne80}. In particular, the family $H$ contains only Blaschke products, and contains all of thin Blaschke products.
In addition, several authors have studied the connected component of inner functions in the context of model spaces and operator theory (see \cite{Al16, BC} for instance).

In the present paper, we aim to give a class of Blaschke products, denoted by $\mathcal{H}_{SC}$, such that each $B\in\mathcal{H}_{SC}$, $B$ and $zB$ belong to the same connected component. Firstly, let us define a class of subsets of the unit open disk, named strip cones and denoted by $SC(\xi,\theta,T_{1}, T_{2})$. In this paper, we study the Blaschke products with zeros lying in a strip cone.

\begin{definition}\label{SC}
Let $\theta\in(0,\pi)$, $\xi\in \partial\mathbb{D}$, $T_1$ and $T_2$ be two nonzero real numbers. Denote by $J_{i}$ the arc on the circle
$$
{\vert}z-(1-T_{i}e^{{\bf i}\theta})\xi{\vert}={\vert}T_i{\vert}
$$
in the unit open disk, for $i=1,2$. Write $\xi_i$ as the intersection point of $J_{i}$ and $\partial\mathbb{D}$ other than $\xi$, $i=1,2$.  We define $SC(\xi,\theta,T_{1}, T_{2})$ is the region bounded by $J_{1}$, $J_{2}$ and $\wideparen{\xi_1\xi_2}$ which is the arc on the unit circle $\partial\mathbb{D}$ from $\xi_1$ to $\xi_2$ without $\xi$, and call it a strip cone. If $T_1=T_2\in \mathbb{R}\setminus\{0\}$, then $SC(\xi,\theta,T_{1}, T_{2})$ is just the arc $J_{1}=J_{2}$. In particular, if $T_1$ and $T_2$ are infinity, then $SC(\xi,\theta,T_{1}, T_{2})$ is just the segment $J_{1}=J_{2}=(-1,1)$.
\end{definition}

One can see some examples of strip cones in Figure \ref{figure1}. Next, we explain why we name it strip cone.

\begin{figure}[htbp]
\centering
\includegraphics[width=0.7\textwidth]{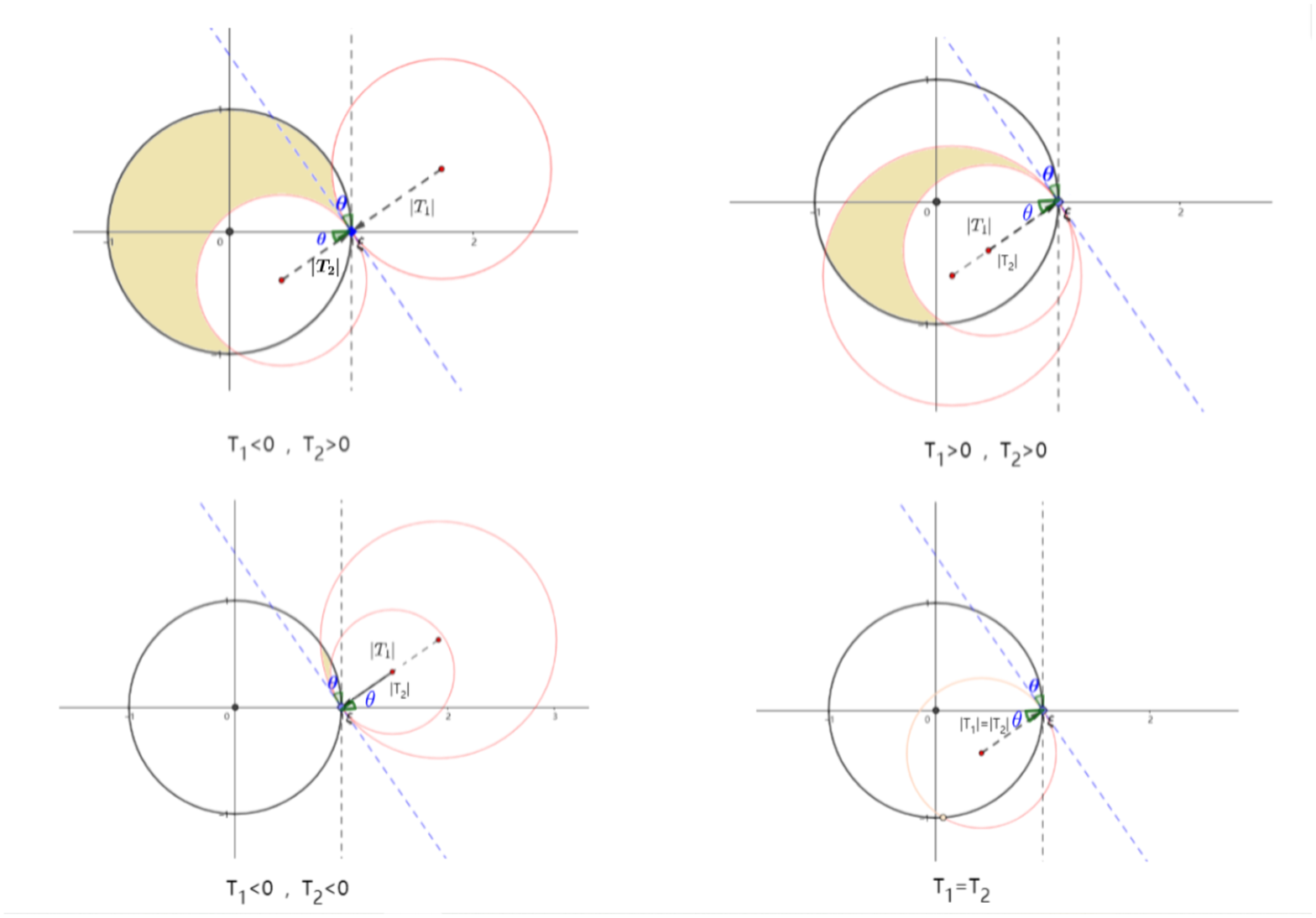}
\caption{Examples of strip cones}
\label{figure1}
\end{figure}

\begin{definition}
Let $\theta\in(-\frac{\pi}{2},\frac{\pi}{2})$, and $L_{1}$ and $L_{2}$ be two parallel straight lines with an angle of $\theta$ to the real axis. Denote by $SL(\theta,L_{1},L_{2})$ the strip region between $L_{1}$ and $L_{2}$ in the right half plane.
\end{definition}

Given any strip cone $SC(\xi,\theta,T_{1}, T_{2})$. Consider the fractional linear transformation $\varphi_{\xi}(z)=(\xi+z)/(\xi-z)$, where ${\vert}\xi{\vert}=1$. It is easy to see that $\varphi_{\xi}$ maps the unit open disk onto the right half plane, and map the arcs $J_1$ and $J_2$ in Definition \ref{SC} to some parallel straight lines $L_{1}$ and $L_{2}$ with the angle of $\theta$ to the imaginary axis in the right half plane. Then, $\varphi_{\xi}$ is an analytic bijection from $SC(\xi,\theta,T_{1},T_{2})$ to $SL(\frac{\pi}{2}-\theta,L_{1},L_{2})$ (see Figure \ref{figure2} for instance). This is the reason we call the subset $SC(\xi,\theta,T_{1},T_{2})$ strip cone. Without loss of generality, we may assume that $\xi=1$, because it is no difference to deal with $\xi=1$ and general $\xi$ with ${\vert}\xi{\vert}=1$. Moreover, we always denote $\varphi(z)=(1+z)/(1-z)$ through this paper.
\begin{figure}[htbp]
	\centering
	\includegraphics[width=0.6\textwidth]{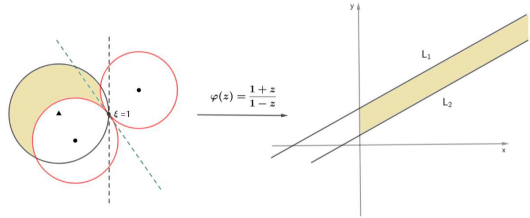}
	\caption{$\varphi(z)$ maps $SC(1,\theta,T_{1},T_{2})$ onto $SL(\frac{\pi}{2}-\theta,L_{1},L_{2})$}
	\label{figure2}
\end{figure}
\begin{definition}
Denote by $\mathcal{H}_{SC}$ the family of all Blaschke products $B$ satisfying the following conditions,
\begin{enumerate}
\item[(i)]the zeros $\{z_{n}\}_{n=1}^{\infty}$ of $B$ lie in some certain strip cone $SC(\xi,\theta,T_{1},T_{2})$;
\item[(ii)] ${\vert}\xi-z_{n}{\vert}$ non-increasingly tends to $0$;
\item[(iii)]there exists a positive number $\delta<1$, such that $\rho(z_{n},z_{n+1})\le\delta$ for any $n\in\mathbb{N}$.
\end{enumerate}
\end{definition}

Now we show our main result as the following theorem.

\vspace{0.25cm}
${\bf Main\ \  Theorem} \ \ $ For any $B\in\mathcal{H}_{SC}$, $B$ and $zB$ belong to the same path-connected component of the inner functions space $\mathcal{F}$ under the uniform topology.
\vspace{0.25cm}

In the next section, we will introduce a method to select a "fine" factor of a Blaschke product in $\mathcal{H}_{SC}$, which plays an important role to prove the main theorem. As a byproduct, we obtain that each Blaschke product in $\mathcal{H}_{SC}$ has an interpolating and one-component factor. Then, we will complete the proof of the main theorem in the last section.

\section{Preliminaries}

First of all,  let us start from the following simple result.

\begin{lemma}\label{factorconnect}
Let $f=\varphi_{1}\cdot\varphi_{2}$ and $g=\psi_{1}\cdot\psi_{2}$, where $f,g,\varphi_{1},\varphi_{2},\psi_{1},\psi_{2}\in\mathcal{F}$. If $\varphi_{1}\sim \psi_{1}$ and $\varphi_{2}\sim \psi_{2}$, then $f\sim g$. In particular, if $\varphi_{1}\sim z\varphi_{1}$, then $f\sim zf$.
\end{lemma}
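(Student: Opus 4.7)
The plan is to construct an explicit path in $\mathcal{F}$ from $f$ to $g$ by multiplying paths given by the hypotheses. Let $\gamma_1:[0,1]\to\mathcal{F}$ be a continuous path with $\gamma_1(0)=\varphi_1$ and $\gamma_1(1)=\psi_1$, and let $\gamma_2:[0,1]\to\mathcal{F}$ be a continuous path with $\gamma_2(0)=\varphi_2$ and $\gamma_2(1)=\psi_2$. Define $\gamma(t)=\gamma_1(t)\cdot\gamma_2(t)$ for $t\in[0,1]$, and check two things: that $\gamma(t)\in\mathcal{F}$ for every $t$, and that $\gamma$ is continuous in the $H^\infty$-metric.

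The first point is immediate from the fact that the product of two inner functions is inner (the radial limits are unimodular almost everywhere, being a product of two factors of unit modulus almost everywhere). For continuity at $t\in[0,1]$, I would use the elementary add-and-subtract estimate
\begin{equation*}
\|\gamma(t)-\gamma(s)\|_\infty \le \|\gamma_1(t)-\gamma_1(s)\|_\infty\,\|\gamma_2(t)\|_\infty + \|\gamma_1(s)\|_\infty\,\|\gamma_2(t)-\gamma_2(s)\|_\infty,
\end{equation*}
together with $\|\gamma_i(\cdot)\|_\infty\le 1$ (inner functions lie in the closed unit ball of $H^\infty$), which reduces the right-hand side to $\|\gamma_1(t)-\gamma_1(s)\|_\infty + \|\gamma_2(t)-\gamma_2(s)\|_\infty$ and hence goes to $0$ by the continuity of $\gamma_1$ and $\gamma_2$. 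Thus $\gamma$ witnesses $f\sim g$.

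For the ``in particular'' clause, take $\psi_1=z\varphi_1$ and $\psi_2=\varphi_2$, using the constant path for $\gamma_2$; the hypothesis $\varphi_1\sim z\varphi_1$ supplies $\gamma_1$, and the conclusion becomes $f=\varphi_1\varphi_2\sim z\varphi_1\varphi_2=zf$. There is no serious obstacle here; the only point worth a line of care is verifying that inner functions are closed under pointwise multiplication and that $\|\cdot\|_\infty\le 1$ on $\mathcal{F}$, so the triangle-inequality bound actually collapses to the desired continuity estimate.
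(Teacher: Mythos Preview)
Your proof is correct. The paper itself does not supply a proof of this lemma; it is introduced as a ``simple result'' and stated without argument. Your approach---multiplying the two given paths pointwise and controlling $\|\gamma(t)-\gamma(s)\|_\infty$ via the add-and-subtract estimate together with $\|\gamma_i(\cdot)\|_\infty\le 1$---is exactly the natural justification the paper leaves implicit, and your handling of the ``in particular'' clause is likewise the intended reading.
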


To prove ${B}\sim z{B}$, it suffices to prove $\widetilde{B}\sim z\widetilde{B}$, if $\widetilde{B}$ is factor of $B$. In this section, it will be shown that we can select a factor $\widetilde{B}$ of $B$ such that $\widetilde{B}$ satisfies more conditions than $B$. The major is how to choose a subsequence of the zeros sequence of $B$. Recall that for a Blaschke product $B\in \mathcal{H}_{SC}$ with zeros $\{z_{n}\}_{n=1}^{\infty}$, there exists a positive number $\delta<1$, such that $\rho(z_{n},z_{n+1})\le\delta$ for any $n\in\mathbb{N}$.

\begin{lemma}\label{le2}
Let $a$, $a{'}$, $b$, $b{'}$ be real numbers satisfying
\begin{equation*}
0\leq a\leq a{'}<1\ \ \text{and} \ \ 0\leq b\leq b{'}<1.
\end{equation*}
Then,
\begin{equation*}
\frac{a+b}{1+{a}b}\leq\frac{a{'}+b{'}}{1+{a{'}}b{'}}
\end{equation*}
\end{lemma}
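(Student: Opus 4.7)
The plan is to reduce the two-variable comparison to two successive one-variable comparisons by inserting an intermediate term, which is the standard monotonicity trick. Specifically, I would show first that $M(x,y) := (x+y)/(1+xy)$ is non-decreasing in $x$ for each fixed $y \in [0,1)$, and then conclude by symmetry and transitivity that $M(a,b) \leq M(a',b) \leq M(a',b')$.

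To establish monotonicity in the first variable, the cleanest route is a direct difference computation: for $0 \leq x \leq x' < 1$ and $0 \leq y < 1$,
\begin{equation*}
\frac{x'+y}{1+x'y} - \frac{x+y}{1+xy} = \frac{(x'-x)(1-y^{2})}{(1+xy)(1+x'y)},
\end{equation*}
which is non-negative because $x' \geq x$, $y < 1$, and both denominators are positive. (Alternatively, one could differentiate to get $\partial_{x} M = (1-y^{2})/(1+xy)^{2} \geq 0$, but the algebraic identity avoids invoking calculus.) By the symmetry $M(x,y) = M(y,x)$, the analogous statement holds in the second variable.

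Applying this twice yields $M(a,b) \leq M(a',b) \leq M(a',b')$, which is exactly the desired inequality. An alternative one-shot computation, which I might record as a remark, is
\begin{equation*}
\frac{a'+b'}{1+a'b'} - \frac{a+b}{1+ab} = \frac{(a'-a)(1-bb') + (b'-b)(1-aa')}{(1+ab)(1+a'b')},
\end{equation*}
where both summands in the numerator are manifestly non-negative under the hypotheses $a \leq a' < 1$ and $b \leq b' < 1$ (so $aa', bb' < 1$). There is no real obstacle here; the only thing to be careful about is keeping track of signs in the algebraic expansion and noting that the bounds $a',b' < 1$ are used precisely to guarantee $1-aa' > 0$ and $1-bb' > 0$.
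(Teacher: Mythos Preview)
Your proposal is correct, and the one-shot computation you record as a remark is exactly the paper's proof: the paper simply writes $\frac{a'+b'}{1+a'b'}-\frac{a+b}{1+ab}=\frac{(a'-a)(1-bb')+(b'-b)(1-a'a)}{(1+ab)(1+a'b')}\geq 0$ and stops. Your primary two-step route via $M(a,b)\leq M(a',b)\leq M(a',b')$ is a minor reorganization of the same algebra and equally valid.
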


\begin{proof}
\begin{align*}
\frac{a{'}+b{'}}{1+{a{'}}b{'}}-\frac{a+b}{1+{a}b}&=\frac{(a{'}+b{'}+a{'}ab+abb{'})-(a+b+aa{'}b{'}+a{'}bb{'})}{(1+ab)(1+a{'}b{'})}\\
&=\frac{(a{'}-a)(1-bb{'})+(b{'}-b)(1-a{'}a)}{(1+ab)(1+a{'}b{'})}\\
&\geq 0.
\end{align*}
\end{proof}

\begin{lemma}\label{sublu}
Let $\{z_{n}\}_{n=1}^{\infty}$ be a sequence of complex numbers in the unit open disk, satisfying that
\begin{enumerate}
\item for any $m\in\mathbb{N}$, $\rho(z_{m},z_{n})\rightarrow 1$ as $n\rightarrow\infty$;
\item there exists a positive number $0<\delta<1$, such that $\rho(z_{n},z_{n+1})\le\delta$ for any $n\in\mathbb{N}$.
\end{enumerate}
Then, for any $0<\varepsilon<1$, we can choose a subsequence $\{z_{n_{k}}\}_{n_{k}=1}^{\infty}$ of $\{z_{n}\}_{n=1}^{\infty}$ such that
\begin{equation*}
0<\varepsilon\le\rho(z_{n_{k}},z_{n_{k+1}})\le\frac{\varepsilon+\delta}{1+\varepsilon\delta}<1.
\end{equation*}
\end{lemma}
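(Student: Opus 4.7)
I would construct the subsequence by a greedy procedure and use the classical ``triangle inequality'' for the pseudo-hyperbolic distance
\[
\rho(a,c)\le\frac{\rho(a,b)+\rho(b,c)}{1+\rho(a,b)\rho(b,c)}
\]
together with Lemma \ref{le2}, which tells us that the right-hand side is monotone non-decreasing in each of $\rho(a,b)$ and $\rho(b,c)$.

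\textbf{Construction.} Set $n_{1}=1$. Having chosen $n_{k}$, define
\[
n_{k+1}:=\min\{\, n>n_{k}:\rho(z_{n_{k}},z_{n})\ge \varepsilon\,\}.
\]
Assumption (1) guarantees that this minimum exists, because $\rho(z_{n_{k}},z_{n})\to 1$ as $n\to\infty$, so eventually the threshold $\varepsilon$ is exceeded. In particular the lower bound $\rho(z_{n_{k}},z_{n_{k+1}})\ge\varepsilon$ is built into the definition.

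\textbf{Upper bound.} We split into two cases. If $n_{k+1}=n_{k}+1$, then assumption (2) gives
\[
\rho(z_{n_{k}},z_{n_{k+1}})\le\delta,
\]
and a direct check (equivalent to $\varepsilon\delta^{2}\le \varepsilon$) shows $\delta\le\frac{\varepsilon+\delta}{1+\varepsilon\delta}$, so the desired bound holds. If instead $n_{k+1}>n_{k}+1$, then by minimality of $n_{k+1}$ we have $\rho(z_{n_{k}},z_{n_{k+1}-1})<\varepsilon$, while assumption (2) gives $\rho(z_{n_{k+1}-1},z_{n_{k+1}})\le\delta$. Applying the pseudo-hyperbolic triangle inequality with $a=z_{n_{k}}$, $b=z_{n_{k+1}-1}$, $c=z_{n_{k+1}}$, and then Lemma \ref{le2} with $a'=\varepsilon$, $b'=\delta$, we conclude
\[
\rho(z_{n_{k}},z_{n_{k+1}})\le\frac{\rho(z_{n_{k}},z_{n_{k+1}-1})+\rho(z_{n_{k+1}-1},z_{n_{k+1}})}{1+\rho(z_{n_{k}},z_{n_{k+1}-1})\rho(z_{n_{k+1}-1},z_{n_{k+1}})}\le\frac{\varepsilon+\delta}{1+\varepsilon\delta}.
\]

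\textbf{Main obstacle.} The only non-routine point is the appeal to the Möbius-type triangle inequality for $\rho$; this is standard but should be invoked cleanly, and the monotonicity packaged in Lemma \ref{le2} is exactly what is needed to turn the strict estimate $\rho(z_{n_{k}},z_{n_{k+1}-1})<\varepsilon$ (which is smaller than $\varepsilon$, not equal to it) into the clean upper bound $\frac{\varepsilon+\delta}{1+\varepsilon\delta}$. The edge case $n_{k+1}=n_{k}+1$ needs to be handled separately because there is no intermediate index at which to pivot, but the small elementary inequality above dispatches it.
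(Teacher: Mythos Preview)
Your proof is correct and follows essentially the same greedy construction and triangle-inequality argument as the paper. The only cosmetic difference is that you explicitly separate the case $n_{k+1}=n_{k}+1$, whereas the paper handles it uniformly by noting that then $z_{n_{k+1}-1}=z_{n_{k}}$ and $\rho(z_{n_{k}},z_{n_{k+1}-1})=0<\varepsilon$, so the same application of Lemma~\ref{le2} goes through.
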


\begin{proof}
Given any $0<\varepsilon<1$. Put $z_{n_{1}}=z_{1}$. Since $\rho(z_{n_{1}},z_{i})\rightarrow 1$ as $i\rightarrow\infty$, we can choose
$$
n_{2}=\min\{i; \ \rho(z_{n_{1}},z_{i})\ge\varepsilon\}.
$$
It is obvious that $\rho(z_{n_{1}},z_{n_{2}})\geq\varepsilon$.
	
With the same method, we choose ${n_{k+1}}$ by
$$
{n_{k+1}}=\min\{{i}; \ i> n_k, \ \rho(z_{n_{k}},z_{i})\ge\varepsilon\}.
$$
Then, $\rho(z_{n_{k}},z_{n_{k+1}})\geq\varepsilon$. Moreover, for each $k=1,2,\ldots$,
$$
0\leq\rho(z_{n_{k}},z_{n_{k+1}-1})<\varepsilon<1 \ \ \text{and} \ \ 0\leq\rho(z_{n_{k+1}-1},z_{n_{k+1}})<\delta<1.
$$
By Lemma \ref{le2}, we have
\begin{equation*}	
\frac{\rho(z_{n_{k}},z_{n_{k+1}-1})+\rho(z_{n_{k+1}-1},z_{n_{k+1}})}{1+\rho(z_{n_{k}},z_{n_{k+1}-1})\rho(z_{n_{k+1}-1},z_{n_{k+1}})}
\le\frac{\varepsilon+\delta}{1+\varepsilon\delta}.
\end{equation*}
Therefore,		
$$			
0<\varepsilon\le\rho(z_{n_{k}},z_{n_{k+1}})\le\frac{\rho(z_{n_{k}},z_{n_{k+1}-1})+\rho(z_{n_{k+1}-1},z_{n_{k+1}})}{1+\rho(z_{n_{k}},z_{n_{k+1}-1})\rho(z_{n_{k+1}-1},z_{n_{k+1}})}
\le\frac{\varepsilon+\delta}{1+\varepsilon\delta}<1.
$$
\end{proof}

\begin{lemma}\label{ab}
Let $\alpha$ and $\beta$ be two complex numbers in the unit open disk with ${\vert}1-\alpha{\vert}<{\vert}1-\beta{\vert}$. Denote $\alpha=s_{1}+{\bf i}(1-s_{1})\cot\theta_{1}$ and $\beta=s_{2}+{\bf i}(1-s_{2})\cot\theta_{2}$, where $\theta_1, \theta_2\in(0, \pi)$. For the pair of $\alpha$ and $\beta$, let the positive numbers $\varepsilon$, $\delta$, $\theta_0$, $C$, $\tau$ and $\eta$ satisfy the following conditions,
\begin{enumerate}
\item[(1)] \ $0<\varepsilon\le\rho(\alpha,\beta)\le\delta<1$;
\item[(2)] \ $\theta_0\in(0, \pi)$ and $C={\vert}1-{\bf i}\cot\theta_{0}{\vert}$;
\item[(3)] \ ${\vert}\cot\theta_{i}-\cot\theta_{0}{\vert}<\tau<\sqrt{\frac{3 C^2\varepsilon^{2}}{16 C^2(1-\varepsilon^{2})+3\varepsilon^{2}}}$, for $i=1,2$;
\item[(4)] \ $3\leq 4-2\eta\leq(1+s_{1}-(1-s_{1})\cot^{2}\theta_{1})(1+s_{2}-(1-s_{2})\cot^{2}\theta_{2})\leq 4$.
\end{enumerate}
Then
\begin{equation*}
0<C_1\le{\big \vert}\frac{1-\alpha}{1-\beta}{\big \vert}\le C_2 <1.
\end{equation*}
where
{\small
\begin{align*}
&C_1=\frac{C-\tau}{C+\tau}\left(\frac{C-\tau}{C+\tau}+\frac{2\delta^{2}-2\sqrt{\delta^{4}+\delta^{2}(C^2-\tau^2)(1-\delta^{2})}}{(C+\tau)^{2}(1-\delta^{2})}\right),
\\
&C_2=\frac{C+\tau}{C-\tau}\left(\frac{C+\tau}{C-\tau}+\frac{(2-\eta)\varepsilon^{2}-\sqrt{(2-\eta)^2\varepsilon^{4}+2\varepsilon^{2}(2-\eta)(C^2-\tau^2)(1-\varepsilon^{2})}}{(C-\tau)^{2}(1-\varepsilon^{2})} \right).
\end{align*}
}
\end{lemma}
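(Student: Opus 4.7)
The plan is to reduce the desired bound to solving a quadratic equation in $t := |1-\alpha|/|1-\beta|$. First I set up: factor $1-\alpha = (1-s_1)(1-i\cot\theta_1)$ and $1-\beta = (1-s_2)(1-i\cot\theta_2)$, so that $|1-\alpha| = (1-s_1)C_1$ and $|1-\beta| = (1-s_2)C_2$ where $C_i := |1-i\cot\theta_i|$, while $1-|\alpha|^2 = (1-s_1)X_1$ and $1-|\beta|^2 = (1-s_2)X_2$ with $X_i := 1 + s_i - (1-s_i)\cot^2\theta_i$. From hypothesis (3) and the reverse triangle inequality, $C_i \in (C-\tau,\, C+\tau)$; from $\cot\theta_1 - \cot\theta_2 = \sin(\theta_2-\theta_1)/(\sin\theta_1\sin\theta_2)$ together with $|\cot\theta_1 - \cot\theta_2| < 2\tau$, I obtain $|\sin\phi| < 2\tau/(C_1 C_2)$ for $\phi := \theta_1 - \theta_2$, hence a uniform lower bound on $\cos\phi$.

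Then I would apply the classical identity $|\alpha-\beta|^2 = \rho^2(1-|\alpha|^2)(1-|\beta|^2)/(1-\rho^2)$, with $\rho := \rho(\alpha,\beta)$. Writing the left-hand side as $|1-\beta|^2(t^2 - 2t\cos\phi + 1)$ (using that $\arg((1-\alpha)/(1-\beta)) = \phi$), substituting the factorizations above on the right, dividing by $(1-s_2)^2 C_2^2$, and using $(1-s_1) = t(1-s_2)C_2/C_1$ collapses the identity to
\[ t^2 - kt + 1 = 0, \qquad k := 2\cos\phi + \frac{\rho^2 X_1 X_2}{(1-\rho^2) C_1 C_2}. \]
Since its two roots are reciprocals and $t < 1$ by hypothesis, $t$ is the smaller root $(k - \sqrt{k^2-4})/2$, which is strictly decreasing in $k$ on $[2,\infty)$.

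The bounds then fall out by plugging extremal parameter choices into this smaller-root formula. For the upper bound $C_2$, I would lower-bound $k$ using $\rho = \varepsilon$, $X_1 X_2 = 4 - 2\eta$, the appropriate endpoint of $C_1 C_2 \in [(C-\tau)^2, (C+\tau)^2]$, and $\cos\phi \ge 1 - \sin^2\phi \ge 1 - 4\tau^2/(C_1 C_2)^2$ (a polynomial lower bound that meshes cleanly with the $(C^2-\tau^2)$ factors appearing in the target). Symmetrically, $C_1$ comes from upper-bounding $k$ via $\rho = \delta$, $X_1 X_2 = 4$, $\cos\phi \le 1$, and the other endpoint of $C_1 C_2$. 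Substituting these into $(k - \sqrt{k^2-4})/2$ and repeatedly using $(C^2 - \tau^2) = (C-\tau)(C+\tau)$ to simplify the nested radicals should yield the stated closed forms. The role of hypothesis (3) on $\tau$ is to guarantee $k_{\min} > 2$, keeping the discriminant positive so that $t < 1$ remains strict and the radicals in $C_1, C_2$ are real.

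The main obstacle is purely algebraic, not conceptual. The reduction to the quadratic $t^2 - kt + 1 = 0$ is the crucial move, after which the bounds are essentially forced; the tedious part will be correctly identifying which endpoint of $C_1 C_2$ enters each bound (since $k$ depends on $C_1 C_2$ through two competing terms, one increasing and one decreasing) and simplifying the nested radicals to match the target expressions for $C_1$ and $C_2$ exactly. I expect no new ideas beyond (i) the explicit factorization of $1-\alpha$ and $1-\beta$ and (ii) the observation that the pseudo-hyperbolic identity, after cancellation of the $(1-s_i)$ factors, yields a reciprocal-root quadratic in $t$.
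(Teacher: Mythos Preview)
Your reduction to the exact reciprocal-root quadratic $t^2 - kt + 1 = 0$ in $t = |1-\alpha|/|1-\beta|$ is correct and cleaner than what the paper does. The paper takes a different route: it works with $K := (1-s_1)/(1-s_2)$ rather than $t$, relates them by the sandwich $\frac{K(C-\tau)}{C+\tau}\le t\le \frac{K(C+\tau)}{C-\tau}$, and then bounds $K$ via quadratic \emph{inequalities} obtained from crude triangle-inequality estimates on $|K(1-i\cot\theta_1)-(1-i\cot\theta_2)|$ (separately in the cases $K\le (C-\tau)/(C+\tau)$, $K\ge 1$, and $K<1$). The resulting quadratics in $K$ have constant term $(C\mp\tau)^2/(C\pm\tau)^2$, not $1$, and the stated $C_1, C_2$ are precisely $\frac{C\mp\tau}{C\pm\tau}$ times the relevant root of those quadratics.

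Because of this, your expectation that plugging extremal parameters into $(k-\sqrt{k^2-4})/2$ ``should yield the stated closed forms'' is where the plan breaks down. For instance, your upper bound on $k$ (taking $\cos\phi\le 1$, $X_1X_2\le 4$, $\rho=\delta$, $D_1D_2\ge (C-\tau)^2$) gives $k_{\max}=2+\tfrac{4\delta^2}{(1-\delta^2)(C-\tau)^2}$, whose smaller root simplifies to $1+\tfrac{2\delta^2-2\sqrt{\delta^4+\delta^2(C-\tau)^2(1-\delta^2)}}{(1-\delta^2)(C-\tau)^2}$ --- not the lemma's $C_1$, which carries the prefactor $(C-\tau)/(C+\tau)$ and has $C^2-\tau^2$ (not $(C-\tau)^2$) under the radical. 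The same mismatch occurs for $C_2$, and hypothesis~(3) on $\tau$ is calibrated to the paper's quadratics (it is exactly the condition $\tau^2<\tfrac{3\varepsilon^2(C^2-\tau^2)}{16C^2(1-\varepsilon^2)}$ that makes their discriminants behave), not to your $k_{\min}>2$. Your method would prove an entirely analogous lemma with different explicit constants and a slightly different smallness condition on $\tau$ --- perfectly adequate for the downstream use in Lemma~\ref{genlu}, but not the literal statement. (Minor: your reuse of $C_1,C_2$ for $|1-i\cot\theta_i|$ collides with the lemma's $C_1,C_2$; rename them.)
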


\begin{proof}
For $i=1,2$, it follows from condition $(3)$ that		
$$
{\vert}(1-{\bf i}\cot\theta_{i})-(1-{\bf i}\cot\theta_{0}){\vert}<\tau,
$$
and consequently,
$$
0<C-\tau<{\vert}1-{\bf i}\cot\theta_{i}{\vert}<C+\tau.
$$
Denote
$$
K=\frac{1-s_{1}}{1-s_{2}}.
$$
Notice that
\begin{equation*}
\begin{aligned}
&0<(1-s_{1})(C-\tau)\le{\vert}1-\alpha{\vert}=(1-s_{1}){\vert}1-{\bf i}\cot\theta_{1}{\vert}\le(1-s_{1})(C+\tau),\\
&0<(1-s_{2})(C-\tau)\le{\vert}1-\beta{\vert}=(1-s_{2}){\vert}1-{\bf i}\cot\theta_{2}{\vert}\le(1-s_{2})(C+\tau).
\end{aligned}
\end{equation*}
Then
\begin{equation}\label{eq7}
\frac{K(C-\tau)}{C+\tau}\le{\big \vert}\frac{1-\alpha}{1-\beta}{\big \vert}\le \frac{K(C+\tau)}{C-\tau}.
\end{equation}
Since
$$
\rho(\alpha,\beta)^{2}=\frac{{\vert}\alpha-\beta{\vert}^{2}}{(1-\overline{\alpha}\beta)(1-\overline{\beta}\alpha)}=\frac{1}{\frac{(1-{\vert}\alpha{\vert}^{2})(1-{\vert}\beta{\vert}^{2})}{{\vert}\alpha-\beta{\vert}^{2}}+1},
$$
it follows from condition $(1)$ that 
\begin{equation}\label{eq:3}
\begin{aligned}
0<\frac{1}{\delta^{2}}-1\le\frac{(1-{\vert}\alpha{\vert}^{2})(1-{\vert}\beta{\vert}^{2})}{{\vert}\alpha-\beta{\vert}^{2}}\le\frac{1}{\varepsilon^{2}}-1.
\end{aligned}
\end{equation}

Consider $\frac{(1-{\vert}\alpha{\vert}^{2})(1-{\vert}\beta{\vert}^{2})}{{\vert}\alpha-\beta{\vert}^{2}}$. We have
\begin{equation*}
\begin{aligned}
&\frac{(1-{\vert}\alpha{\vert}^{2})(1-{\vert}\beta{\vert}^{2})}{{\vert}\alpha-\beta{\vert}^{2}} \\
=&\frac{(1-(s_{1}^{2}+(1-s_{1})^{2}\cot^{2}\theta_{1}))\cdot(1-(s_{2}^{2}+(1-s_{2})^{2}\cot^{2}\theta_{2}))}{{\vert}(1-\beta)-(1-\alpha){\vert}^{2}}\\
=&\frac{(1-s_{1})(1-s_{2})(1+s_{1}-(1-s_{1})\cot^{2}\theta_{1})(1+s_{2}-(1-s_{2})\cot^{2}\theta_{2}))}{{\vert}(1-s_{1})(1-{\bf i}\cot\theta_{1})-(1-s_{2})(1-{\bf i}\cot\theta_{2}){\vert}^{2}}.
\end{aligned}
\end{equation*}
Denote
$$
{W}_{1}=1+s_{1}-(1-s_{1})\cot^{2}\theta_{1} \ \ \text{and}  \ \ {W}_{2}=1+s_{2}-(1-s_{2})\cot^{2}\theta_{2}.
$$
Then,
$$
3\leq 4-2\eta\leq{W}_{1}\cdot{W}_{2}\leq4.
$$

Now we give the lower bound and upper bound of $K$, respectively. Then, by inequality (\ref{eq7}), we can complete the proof.

{\bf Lower bound of $K$.}  \ If $K\leq \frac{C-\tau}{C+\tau}$, we have
\begin{equation*}
\begin{aligned}
\frac{(1-{\vert}\alpha{\vert}^{2})(1-{\vert}\beta{\vert}^{2})}{{\vert}\alpha-\beta{\vert}^{2}}=&\frac{(1-s_{1})(1-s_{2}){W}_{1}{W}_{2}}{{\vert}(1-s_{1})(1-{\bf i}\cot\theta_{1})-(1-s_{2})(1-{\bf i}\cot\theta_{2}){\vert}^{2}}\\
=&\frac{K{W}_{1}{W}_{2}}{{\vert}K(1-{\bf i}\cot\theta_{1})-(1-{\bf i}\cot\theta_{2}){\vert}^{2}}\\
\le&\frac{4K}{({\vert}(1-{\bf i}\cot\theta_{2}){\vert}-{\vert}K(1-{\bf i}\cot\theta_{1}){\vert})^{2}}\\
\le&\frac{4K}{((C-\tau)-K(C+\tau))^{2}}.
\end{aligned}
\end{equation*}
It follows from inequality (\ref{eq:3}) that
\begin{equation*}
\frac{4K}{((C-\tau)-K(C+\tau))^{2}}\ge\frac{(1-{\vert}\alpha{\vert}^{2})(1-{\vert}\beta{\vert}^{2})}{{\vert}\alpha-\beta{\vert}^{2}}\ge\frac1{\delta^{2}}-1.
\end{equation*}
Then,
$$
K^{2}-2\left(\frac{C-\tau}{C+\tau}+\frac{2\delta^{2}}{(C+\tau)^{2}(1-\delta^{2})}\right)K+\frac{(C-\tau)^2}{(C+\tau)^2}\le0,
$$
Let $a_1$ and $a_2$ be the two roots of the above quadratic polynomial of $K$,
$$
a_1=\frac{C-\tau}{C+\tau}+\frac{2\delta^{2}-2\sqrt{\delta^{4}+\delta^{2}(C^2-\tau^2)(1-\delta^{2})}}{(C+\tau)^{2}(1-\delta^{2})},
$$
$$
a_2=\frac{C-\tau}{C+\tau}+\frac{2\delta^{2}+2\sqrt{\delta^{4}+\delta^{2}(C^2-\tau^2)(1-\delta^{2})}}{(C+\tau)^{2}(1-\delta^{2})}.
$$
One can see that
$$
0<a_1<\frac{C-\tau}{C+\tau}<a_2.
$$
Then, we always have
$$
K\geq a_1>0.
$$
Moreover, put $C_1=\frac{C-\tau}{C+\tau}\cdot a_1$,
$$
{\big \vert}\frac{1-\alpha}{1-\beta}{\big \vert}\ge\frac{K(C-\tau)}{C+\tau}\geq C_1>0.
$$

{\bf Upper bound of $K$.}  Notice that
\begin{equation*}
\begin{aligned}
&\frac{(1-{\vert}\alpha{\vert}^{2})(1-{\vert}\beta{\vert}^{2})}{{\vert}\alpha-\beta{\vert}^{2}} \\
=&\frac{K{W}_{1}{W}_{2}}{{\vert}K(1-{\bf i}\cot\theta_{1})-(1-{\bf i}\cot\theta_{2}){\vert}^{2}}\\
=&\frac{K{W}_{1}{W}_{2}}{{\big \vert}iK(\cot\theta_{0}-\cot\theta_{1})-i(\cot\theta_{0}-\cot\theta_{2})+(K-1)(1-{\bf i}\cot\theta_{0}){\big \vert}^{2}}\\
\ge&\frac{(4-2\eta)K}{\left({\big \vert}K(\cot\theta_{0}-\cot\theta_{1}){\big \vert}+{\big \vert}\cot\theta_{0}-\cot\theta_{2}{\big \vert}+{\big \vert}(K-1)C{\big \vert}\right)^{2}}\\
\ge&\frac{(4-2\eta)K}{({\vert}(K-1){\vert}C+(K+1)\tau)^{2}}.
\end{aligned}
\end{equation*}
Firstly, let us prove that $K$ must be less than $1$.
If $K\geq 1$, we have
$$
\frac{(1-{\vert}\alpha{\vert}^{2})(1-{\vert}\beta{\vert}^{2})}{{\vert}\alpha-\beta{\vert}^{2}}\ge\frac{(4-2\eta)K}{((1-K)C+(K+1)\tau)^{2}}.
$$
It follows from inequality (\ref{eq:3}) that
\begin{equation*}
\frac{(4-2\eta)K}{((K-1)C+(K+1)\tau)^{2}}\le\frac{(1-{\vert}\alpha{\vert}^{2})(1-{\vert}\beta{\vert}^{2})}{{\vert}\alpha-\beta{\vert}^{2}}\le\frac1{\varepsilon^{2}}-1.
\end{equation*}
Then,
\begin{equation*}
K^{2}-2\left(\frac{C-\tau}{C+\tau}+\frac{(2-\eta)\varepsilon^{2}}{(1-\varepsilon^{2})(C+\tau)^{2}}\right)K+\frac{(C-\tau)^2}{(C+\tau)^2}\ge0,
\end{equation*}
Let $\lambda_1$ and $\lambda_2$ be the two roots of the above quadratic polynomial of $K$,
$$
\lambda_1=\frac{C-\tau}{C+\tau}+\frac{(2-\eta)\varepsilon^{2}-\sqrt{(2-\eta)^2\varepsilon^{4}+2\varepsilon^{2}(2-\eta)(C^2-\tau^2)(1-\varepsilon^{2})}}{(C+\tau)^{2}(1-\varepsilon^{2})},
$$
$$
\lambda_2=\frac{C-\tau}{C+\tau}+\frac{(2-\eta)\varepsilon^{2}+\sqrt{(2-\eta)^2\varepsilon^{4}+2\varepsilon^{2}(2-\eta)(C^2-\tau^2)(1-\varepsilon^{2})}}{(C+\tau)^{2}(1-\varepsilon^{2})}.
$$
Since
\begin{align*}
&\left(\frac{C+\tau}{C-\tau} \right)^{2}-2\left(\frac{C-\tau}{C+\tau}+\frac{(2-\eta)\varepsilon^{2}}{(1-\varepsilon^{2})(C+\tau)^{2}}\right)\left(\frac{C+\tau}{C-\tau} \right)+\frac{(C-\tau)^2}{(C+\tau)^2} \\
= \ &\left(\frac{C+\tau}{C-\tau}-\frac{C-\tau}{C+\tau} \right)^{2}-\frac{(4-2\eta)\varepsilon^{2}}{(1-\varepsilon^{2})(C+\tau)^{2}}\cdot\left(\frac{C+\tau}{C-\tau} \right) \\
= \ &\frac{16C^2\tau^2}{(C^2-\tau^2)^2}-\frac{(4-2\eta)\varepsilon^{2}}{(1-\varepsilon^{2})(C^2-\tau^2)} \\
= \ &\frac{16C^2}{(C^2-\tau^2)^2}\left(\tau^2-\frac{3\varepsilon^{2}(C^2-\tau^2)}{16 C^2(1-\varepsilon^{2})}\right) \\
< \ &0,
\end{align*}
one can see that
$$
0<\lambda_1<\frac{C-\tau}{C+\tau}<1<\frac{C+\tau}{C-\tau}<\lambda_2.
$$
Then, we have
$$
K\geq \lambda_2>\frac{C+\tau}{C-\tau}>1.
$$
However, by inequality (\ref{eq7}),
$$
{\big \vert}\frac{1-\alpha}{1-\beta}{\big \vert}\ge\frac{K(C-\tau)}{C+\tau}\ge \lambda_2\cdot \frac{C-\tau}{C+\tau}>1.
$$
It is a contradiction to ${\vert}1-\alpha{\vert}<{\vert}1-\beta{\vert}$.

Now we have known $K<1$. Then,
$$
\frac{(1-{\vert}\alpha{\vert}^{2})(1-{\vert}\beta{\vert}^{2})}{{\vert}\alpha-\beta{\vert}^{2}}\ge\frac{(4-2\eta)K}{((1-K)C+(K+1)\tau)^{2}}.
$$
It follows from inequality (\ref{eq:3}) that
\begin{equation*}
\frac{(4-2\eta)K}{((1-K)C+(K+1)\tau)^{2}}\le\frac{(1-{\vert}\alpha{\vert}^{2})(1-{\vert}\beta{\vert}^{2})}{{\vert}\alpha-\beta{\vert}^{2}}\le\frac1{\varepsilon^{2}}-1.
\end{equation*}
Then,
\begin{equation*}
K^{2}-2\left(\frac{C+\tau}{C-\tau}+\frac{(2-\eta)\varepsilon^{2}}{(1-\varepsilon^{2})(C-\tau)^{2}}\right)K+\frac{(C+\tau)^2}{(C-\tau)^2}\ge0.
\end{equation*}
Let $A_1$ and $A_2$ be the two roots of the above quadratic polynomial of $K$,
$$
A_1=\frac{C+\tau}{C-\tau}+\frac{(2-\eta)\varepsilon^{2}-\sqrt{(2-\eta)^2\varepsilon^{4}+2\varepsilon^{2}(2-\eta)(C^2-\tau^2)(1-\varepsilon^{2})}}{(C-\tau)^{2}(1-\varepsilon^{2})},
$$
$$
A_2=\frac{C+\tau}{C-\tau}+\frac{(2-\eta)\varepsilon^{2}+\sqrt{(2-\eta)^2\varepsilon^{4}+2\varepsilon^{2}(2-\eta)(C^2-\tau^2)(1-\varepsilon^{2})}}{(C-\tau)^{2}(1-\varepsilon^{2})}.
$$
Since
\begin{align*}
&\left(\frac{C-\tau}{C+\tau} \right)^{2}-2\left(\frac{C+\tau}{C-\tau}+\frac{(2-\eta)\varepsilon^{2}}{(1-\varepsilon^{2})(C-\tau)^{2}}\right)\left(\frac{C-\tau}{C+\tau} \right)+\frac{(C+\tau)^2}{(C-\tau)^2} \\
= \ &\left(\frac{C-\tau}{C+\tau}-\frac{C+\tau}{C-\tau} \right)^{2}-\frac{(4-2\eta)\varepsilon^{2}}{(1-\varepsilon^{2})(C-\tau)^{2}}\cdot\left(\frac{C-\tau}{C+\tau} \right) \\
= \ &\frac{16C^2\tau^2}{(C^2-\tau^2)^2}-\frac{(4-2\eta)\varepsilon^{2}}{(1-\varepsilon^{2})(C^2-\tau^2)} \\
= \ &\frac{16C^2}{(C^2-\tau^2)^2}\left(\tau^2-\frac{3\varepsilon^{2}(C^2-\tau^2)}{16 C^2(1-\varepsilon^{2})}\right) \\
< \ &0,
\end{align*}
one can see that
$$
0<A_1<\frac{C-\tau}{C+\tau}<1<A_2.
$$
Then, we always have
$$
K\leq A_1<\frac{C-\tau}{C+\tau}<1.
$$
Moreover, put $C_2=\frac{C+\tau}{C-\tau}\cdot A_1$,
$$
{\big \vert}\frac{1-\alpha}{1-\beta}{\big \vert}\le\frac{K(C+\tau)}{C-\tau}\leq C_2<1.
$$
\end{proof}

\begin{lemma}\label{genlu}
Let $\{z_{n}\}_{n=1}^{\infty}$ be a sequence of complex numbers in a strip cone $SC(\xi,\theta_0,T_{1},T_{2})$, $\theta_0\in (0, \pi)$, satisfying that
\begin{enumerate}
\item ${\vert}1-z_{n}{\vert}$ non-increasingly tends to $1$, as $n\rightarrow\infty$;
\item there exist two positive numbers $0<\varepsilon\le\delta<1$, such that for any $n\in\mathbb{N}$,
\[
0<\varepsilon\le \rho(z_{n},z_{n+1})\le\delta<1.
\]
\end{enumerate}	
Then there exists a positive integer $N$ and two positive constants $C_{1}$ and $C_{2}$, such that for any $n\geq N$,
\begin{equation*}
0<C_{1}\le{\big \vert}\frac{1-z_{n+1}}{1-z_{n}}{\big \vert}\le C_{2}<1.
\end{equation*}
Furthermore, this implies $\sum_{n=1}^{\infty}{\vert}1-z_n{\vert}<\infty$.
\end{lemma}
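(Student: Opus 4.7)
The plan is to apply Lemma~\ref{ab} to consecutive pairs $(\alpha,\beta)=(z_{n+1},z_n)$ for $n$ large. I parameterize each zero as
\[
z_n = s_n+{\bf i}(1-s_n)\cot\theta_n,\qquad \theta_n\in(0,\pi),\ s_n\in(-1,1),
\]
so that $1-z_n=(1-s_n)(1-{\bf i}\cot\theta_n)$ and $\arg(1-z_n)=\theta_n-\pi/2$, matching exactly the format required by Lemma~\ref{ab}.

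The first step I would carry out is the geometric input $\theta_n\to\theta_0$ and $s_n\to 1$. From condition~(1) we have $|1-z_n|\to 0$ and therefore $z_n\to 1$; under the map $\varphi(z)=(1+z)/(1-z)$, which carries $SC(1,\theta_0,T_1,T_2)$ bijectively onto the strip $SL(\pi/2-\theta_0,L_1,L_2)$ in the right half-plane, the images $\varphi(z_n)$ must tend to infinity inside this strip. Any sequence escaping every bounded subset of such a strip has argument converging to the inclination $\pi/2-\theta_0$ of the strip. Combined with the asymptotic $\varphi(z_n)\sim 2/(1-z_n)$ valid as $z_n\to 1$, this gives $\arg(1-z_n)\to\theta_0-\pi/2$, i.e., $\theta_n\to\theta_0$. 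Simultaneously $s_n\to 1$, so that $W_n:=1+s_n-(1-s_n)\cot^2\theta_n\to 2$.

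With these limits secured, fix $\tau>0$ smaller than the threshold appearing in condition~(3) of Lemma~\ref{ab} (a quantity depending only on $\varepsilon$ and $C:=|1-{\bf i}\cot\theta_0|$) and an $\eta\in(0,1/2]$. Choose $N$ large enough that for every $n\ge N$ one has simultaneously $|\cot\theta_n-\cot\theta_0|<\tau$, $|\cot\theta_{n+1}-\cot\theta_0|<\tau$, $W_nW_{n+1}\in[4-2\eta,4]$, and $|1-z_{n+1}|\le|1-z_n|$. For indices with strict decrease, Lemma~\ref{ab} applies directly and yields uniform constants $C_1,C_2\in(0,1)$ with $0<C_1\le|(1-z_{n+1})/(1-z_n)|\le C_2<1$. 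The degenerate equality case $|1-z_{n+1}|=|1-z_n|$ forces $z_n$ and $z_{n+1}$ to lie on a common level circle of $|1-z|$; however, on any such circle the strip cone contains only boundedly many sequence points at pairwise $\rho$-distance $\ge\varepsilon$, so equality plateaus have uniformly bounded length and can be absorbed by a one-step shift.

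The summability $\sum_n|1-z_n|<\infty$ then follows at once from the geometric decay $|1-z_n|\le C_2^{\,n-N}|1-z_N|$ along the strict-decrease skeleton, together with the bounded plateau length. The principal obstacle is the first step, namely deriving $\theta_n\to\theta_0$ cleanly, for which the strip picture under $\varphi$ is decisive; once the limiting direction is identified, the remainder of the argument is essentially bookkeeping of the constants produced by Lemma~\ref{ab}.
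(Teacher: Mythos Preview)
Your approach is essentially the paper's: show $\cot\theta_n\to\cot\theta_0$ and $s_n\to 1$ so that for all large $n$ the hypotheses (1)--(4) of Lemma~\ref{ab} are satisfied by the pair $(\alpha,\beta)=(z_{n+1},z_n)$, then read off the uniform constants $C_1,C_2$ from that lemma and deduce summability by geometric comparison. The only variation is that you obtain $\theta_n\to\theta_0$ via the strip image under $\varphi$, whereas the paper uses the arc parametrization $z_n=(1-t_ne^{{\bf i}\theta_0})+t_ne^{{\bf i}\zeta_n}$ of points in $SC(1,\theta_0,T_1,T_2)$ to compute the same limit directly; your added discussion of the equality case $|1-z_{n+1}|=|1-z_n|$ is unnecessary, since once hypotheses (1)--(4) of Lemma~\ref{ab} are in force its conclusion $C_2<1$ already precludes equality.
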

\begin{proof}
Without loss of generality, we may assume that $\{z_{n}\}_{n=1}^{\infty}$ lie in a strip cone $SC(1,\theta_0,T_{1},T_{2})$ (see Figure \ref{figure3}).
\begin{figure}[htbp]
		\centering
		\includegraphics[width=0.8\textwidth]{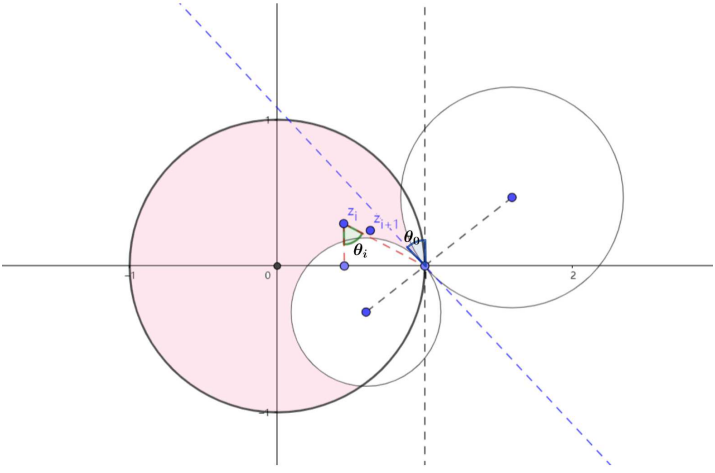}
		\caption{Zeros $\{z_{n}\}_{n=1}^{\infty}$ of $B(z)$}
		\label{figure3}
	\end{figure}	

Denote
$$
z_{n}=x_{n}+{\bf i}(1-x_{n})\cot\theta_{n}, \ \ \ \ \text{for} \ n=1,2,\ldots,
$$
and
$$
C={\vert}1-{\bf i}\cot\theta_{0}{\vert}.
$$
Following from $z_n\in SC(1,\theta_0,T_{1},T_{2})$,
we could also denote
$$
z_{n}=(1-t_{n}{\rm e}^{{\bf i}\theta_{0}})+t_n{\rm e}^{{\bf i}\zeta_{n}},
$$
where $t_n\in [T_1, T_2]$ if $T_1T_2>0$, and $t_n\in (-\infty, T_1]\cup [T_2, +\infty)$ if $T_1T_2<0$.
Furthermore,
$$
\lim\limits_{n\rightarrow\infty}1-z_n=\lim\limits_{n\rightarrow\infty}t_n({\rm e}^{{\bf i}\theta_{0}}-{\rm e}^{{\bf i}\zeta_{n}})=0=\lim\limits_{n\rightarrow\infty}1-x_n.
$$
Notice that $t_n$ is uniformly far away from $0$ whenever $T_1T_2>0$ or $T_1T_2<0$. Then,
$\zeta_{n}\rightarrow\theta_{0}$, as $n\rightarrow\infty$. Consequently,
$$
\lim\limits_{n\rightarrow\infty}\cot\theta_{n}=\lim\limits_{n\rightarrow\infty}\frac{{\rm Imz_n}}{1-{\rm Rez_n}}
=\lim\limits_{n\rightarrow\infty}\frac{t_n(\sin\zeta_n-\sin\theta_0)}{t_n(\cos\zeta_n-\cos\theta_0)}=\cot\theta_0.
$$
Hence, for any two positive number
$$
0<\eta\le\frac{1}{2} \ \ \ \text{and} \ \ \ 0<\tau\le\sqrt{\frac{3 C^2\varepsilon^{2}}{16 C^2(1-\varepsilon^{2})+3\varepsilon^{2}}},
$$
there exists $N\in \mathbb{N}$ such that, for all $n\geq N$,
$$
3\leq 4-2\eta\leq(1+x_{n}-(1-x_{n})\cot^{2}\theta_{n})(1+x_{n+1}-(1-x_{n+1})\cot^{2}\theta_{n+1})\leq 4
$$
and
$$
{\vert}\cot\theta_{n}-\cot\theta_{0}{\vert}<\tau.
$$
Therefore, by Lemma \ref{ab}, there exist two positive constants $C_{1}$ and $C_{2}$, such that for any $n\geq N$,
\begin{equation*}
0<C_{1}\le{\big \vert}\frac{1-z_{n+1}}{1-z_{n}}{\big \vert}\le C_{2}<1.
\end{equation*}
Furthermore, this implies
\[
\sum\limits_{n=1}^{\infty}{\vert}1-z_n{\vert}\le \sum_{n=1}^{\infty}{\vert}1-z_1{\vert}C_2^{n-1}=\frac{{\vert}1-z_1{\vert}}{1-C_2}<\infty.
\]
\end{proof}

Now, we show that one can select a "fine" subsequence of zeros of each Blaschke product $B\in\mathcal{H}_{SC}$.

\begin{definition}\label{fine}
A sequence $\{z_{n}\}_{n=1}^{\infty}$ is said to be fine, if it satisfies the following conditions.
\begin{enumerate}
\item[(1)] \ The sequence $\{z_{n}\}_{n=1}^{\infty}$ lies in some certain strip cone $SC(\xi,\theta, T_{1},T_{2})$.
\item[(2)] \ ${\vert}\xi-z_{n}{\vert}$ non-increasingly tends to $\xi$, as $n\rightarrow\infty$;
\item[(3)] \ There exist two positive numbers $0<\varepsilon\le\delta<1$ such that for any $n\in\mathbb{N}$,
\[
0<\varepsilon\le\rho(z_{n},z_{n+1})\le\delta.
\]
Moreover, there exists a positive integer $N$ and two positive constants $C_{1}$ and $C_{2}$, such that for any $n\geq N$,
\begin{equation*}
0<C_{1}\le{\big \vert}\frac{\xi-z_{n+1}}{\xi-z_{n}}{\big \vert}\le C_{2}<1.
\end{equation*}
In particular, $\sum\limits_{n=1}^{\infty}{\vert}\xi-z_{n}{\vert}<\infty$.
\item[(4)] \  ${\rm Re} \varphi_{\xi}(z_{n})$ monotonically tends to $+\infty$, where $\varphi_{\xi}(z)=(\xi+z)/(\xi-z)$. Moreover, there are two positive numbers $\widetilde{C_1}$ and $\widetilde{C_2}$ such that
$$
0<\widetilde{C_1}\leq \frac{{\rm Re}\varphi_{\xi}(z_{n})}{{\rm Re}\varphi_{\xi}(z_{n+1})} \leq \widetilde{C_2}<1.
$$
\end{enumerate}
Furthermore, a Blaschke product is said to be fine if its zeros sequence $\{z_{n}\}_{n=1}^{\infty}$ is fine. And denote by $\mathcal{\widetilde{H}}_{SC}$ the family of all fine Blaschke products.
\end{definition}

\begin{lemma}\label{finesub}
Let $B$ be a Blaschke product in $\mathcal{H}_{SC}$ with zeros $\{z_{n}\}_{n=1}^{\infty}$. Then $B$ has a factor in $\mathcal{\widetilde{H}}_{SC}$, i.e., the sequence $\{z_{n}\}_{n=1}^{\infty}$ has a fine subsequence.
\end{lemma}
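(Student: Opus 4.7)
The plan is to construct a fine subsequence of the zeros of $B$ by three successive reductions, each invoking a lemma from this section; at every stage I pass to a subsequence (equivalently, a factor of $B$) and relabel. First, I verify the hypotheses of Lemma~\ref{sublu}. Since ${\vert}\xi - z_n{\vert} \to 0$, one has ${\vert}z_n{\vert} \to 1$; for any fixed $m$,
\[
1 - \rho(z_m, z_n)^2 = \frac{(1 - {\vert}z_m{\vert}^2)(1 - {\vert}z_n{\vert}^2)}{{\vert}1 - \overline{z_m} z_n{\vert}^2} \longrightarrow 0,
\]
because the numerator tends to $0$ while the denominator tends to ${\vert}1 - \overline{z_m}\xi{\vert}^2 > 0$. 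Condition (iii) of $\mathcal{H}_{SC}$ supplies $\rho(z_n, z_{n+1}) \le \delta$. Fixing $\varepsilon \in (0, 1)$ and applying Lemma~\ref{sublu}, a subsequence (relabeled $\{z_n\}$) satisfies $\varepsilon \le \rho(z_n, z_{n+1}) \le \delta' := (\varepsilon + \delta)/(1 + \varepsilon\delta) < 1$, while conditions (i) and (ii) of $\mathcal{H}_{SC}$ are hereditary and survive.

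Second, assume without loss of generality that $\xi = 1$. The relabeled sequence meets the hypotheses of Lemma~\ref{genlu} with constants $\varepsilon$ and $\delta'$, yielding some $N$ and $0 < C_1 \le C_2 < 1$ with $C_1 \le {\vert}1 - z_{n+1}{\vert}/{\vert}1 - z_n{\vert} \le C_2$ for $n \ge N$. Dropping the first $N - 1$ terms, the bound holds for every $n \ge 1$, and summability $\sum {\vert}1 - z_n{\vert} < \infty$ follows by geometric comparison. This establishes conditions (1)--(3) of Definition~\ref{fine}.

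Third, for condition (4), write $z_n = x_n + {\bf i}(1 - x_n)\cot\theta_n$ as in the proof of Lemma~\ref{genlu}, so that $x_n \to 1$ and $\cot\theta_n \to \cot\theta_0$. From $1 - z_n = (1 - x_n)(1 - {\bf i}\cot\theta_n)$ one obtains ${\vert}1 - z_n{\vert} = (1 - x_n)/\sin\theta_n$ and
\[
{\rm Re}\,\varphi(z_n) = \frac{1 - {\vert}z_n{\vert}^2}{{\vert}1 - z_n{\vert}^2} = \frac{\bigl(1 + x_n - (1 - x_n)\cot^2\theta_n\bigr)\sin^2\theta_n}{1 - x_n},
\]
whose numerator tends to $2\sin^2\theta_0 > 0$, so ${\rm Re}\,\varphi(z_n) \to +\infty$. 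Combining with the identity $(1 - x_{n+1})/(1 - x_n) = ({\vert}1 - z_{n+1}{\vert}/{\vert}1 - z_n{\vert})\cdot(\sin\theta_{n+1}/\sin\theta_n)$ and the convergences above,
\[
\frac{{\rm Re}\,\varphi(z_n)}{{\rm Re}\,\varphi(z_{n+1})} = \bigl(1 + o(1)\bigr)\cdot\frac{{\vert}1 - z_{n+1}{\vert}}{{\vert}1 - z_n{\vert}}.
\]
For any $\widetilde{C_1} \in (0, C_1)$ and $\widetilde{C_2} \in (C_2, 1)$, all sufficiently large $n$ satisfy $\widetilde{C_1} \le {\rm Re}\,\varphi(z_n)/{\rm Re}\,\varphi(z_{n+1}) \le \widetilde{C_2}$; a final truncation extends this to every $n \ge 1$, giving the strict monotonicity of ${\rm Re}\,\varphi(z_n) \to +\infty$.

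The only subtle point is that the $o(1)$ correction cannot nullify the strict gap $C_2 < 1$ coming from Lemma~\ref{genlu}: if $\cot\theta_n$ or $\sin\theta_n$ were to oscillate, the correction could push the ratio past $1$ and destroy the monotonicity demanded by condition (4). Convergence of these geometric quantities is exactly what the strip cone assumption together with ${\vert}\xi - z_n{\vert} \to 0$ supplies, through the parametrization used in Lemma~\ref{genlu}. Since each reduction drops only finitely many indices, the surviving sequence is a subsequence of the original zeros, hence defines a fine Blaschke factor of $B$.
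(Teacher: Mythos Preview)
Your proof is correct and follows essentially the same route as the paper's: apply Lemma~\ref{sublu}, then Lemma~\ref{genlu}, then show ${\rm Re}\,\varphi(z_n)/{\rm Re}\,\varphi(z_{n+1}) = (1+o(1))\,{\vert}1-z_{n+1}{\vert}/{\vert}1-z_n{\vert}$ and truncate. The only cosmetic differences are that you explicitly verify the hypothesis $\rho(z_m,z_n)\to 1$ of Lemma~\ref{sublu} (which the paper leaves implicit), and you obtain the asymptotic for ${\rm Re}\,\varphi(z_n)$ via the closed formula $(1-{\vert}z_n{\vert}^2)/{\vert}1-z_n{\vert}^2$ rather than via the paper's angular argument ${\rm Re}\,\varphi(z_n)/{\vert}\varphi(z_n){\vert}\to\cos(\tfrac{\pi}{2}-\theta_0)$; both lead to the same conclusion.
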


\begin{proof}
Without loss of generality, assume $\{z_{n}\}_{n=1}^{\infty}$ lies in some certain strip cone $SC(1,\theta_0, T_{1},T_{2})$. Recall that $\{z_{n}\}_{n=1}^{\infty}$  satisfies the following conditions.
\begin{enumerate}
\item \  ${\vert}1-z_{n}{\vert}$ non-increasingly tends to $1$, as $n\rightarrow\infty$.
\item \  There exists a positive number $0<\delta<1$, such that $\rho(z_{n},z_{n+1})\le\delta$ for any $n\in\mathbb{N}$.
\end{enumerate}
Denote by $SL(\frac{\pi}{2}-\theta_0,L_{1},L_{2})$ the image of the strip cone $SC(1, \theta_0, T_{1},T_{2})$ under $\varphi$. We may write
$$
L_{1}:y-\tan(\frac{\pi}{2}-\theta_0)\cdot x-c_1=0 \ \ \  \text{and} \ \ \ L_{2}:y-\tan(\frac{\pi}{2}-\theta_0)\cdot x-c_2=0.
$$

Since ${\vert}1-z_{n+1}{\vert}\leq {\vert}1-z_n{\vert}$ for each $n\in\mathbb{N}$, we have
\begin{align*}
{\vert}\varphi(z_{n+1}){\vert}=&{\big \vert}\frac{1+z_{n+1}}{1-z_{n+1}} {\big \vert} \\
=&{\big \vert}1-\frac{2}{1-z_{n+1}} {\big \vert} \\
\geq &{\big \vert}\frac{2}{1-z_{n+1}} {\big \vert}-1 \\
\geq &{\big \vert}\frac{2}{1-z_{n}} {\big \vert}-1 \\
\geq &{\big \vert}\frac{2}{1-z_{n}}-1 {\big \vert}-2 \\
=& {\vert}\varphi(z_{n}){\vert}-2.
\end{align*}

By Lemma \ref{sublu}, there exists a subsequence $\{z_{n_{k}}\}_{k=1}^{\infty}$ of $\{z_{n}\}_{n=1}^{\infty}$ and positive numbers $\varepsilon$ and $\delta$, such that
$$
0<\varepsilon\le\rho(z_{n_{k}},z_{n_{k+1}})\le \delta<1.
$$

Consequently, by Lemma \ref{genlu}, there are  positive numbers $C_1$ and $C_2$ such that
\begin{equation*}
0<C_{1}\le{\big \vert}\frac{1-z_{n_{k+1}}}{1-z_{n_k}}{\big \vert}\le C_2<1.
\end{equation*}
Since $z_{n_k}\rightarrow 1$ as $k\rightarrow\infty$ and $\varphi(z_{n_k})\in SL(\frac{\pi}{2}-\theta_0,L_{1},L_{2})$, we have
$$
\lim\limits_{k\rightarrow\infty}{\vert}\varphi(z_{n_{k}}){\vert}=\lim\limits_{k\rightarrow\infty}{\big \vert}\frac{1+z_{n_{k}}}{1-z_{n_{k}}} {\big \vert}=+\infty,
$$
$$
\lim\limits_{k\rightarrow\infty}\frac{{\rm Re}\varphi(z_{n_{k}})}{{\vert}\varphi(z_{n_{k}}){\vert}}=\cos(\frac{\pi}{2}-\theta_0)>0
$$
and
$$
\lim\limits_{k\rightarrow\infty}{\big \vert}\frac{\varphi(z_{n_{k}})}{\varphi(z_{n_{k+1}})} {\big \vert} \bigg/  {\big \vert}\frac{1-z_{n_{k+1}}}{1-z_{n_{k}}} {\big \vert}
=\lim\limits_{k\rightarrow\infty}{\big \vert}\frac{1+z_{n_{k}}}{1+z_{n_{k+1}}} {\big \vert}=1.
$$
Furthermore,
$$
\lim\limits_{k\rightarrow\infty}\left(\frac{{\rm Re}\varphi(z_{n_{k}})}{{\rm Re}\varphi(z_{n_{k+1}})} \right) \bigg/ {\big \vert}\frac{1-z_{n_{k+1}}}{1-z_{n_{k}}} {\big \vert}=1.
$$
Then, there exists a positive integer $k_0$ such that for any $k\geq k_0$,
$$
0<\frac{C_1}{2}\leq \frac{{\rm Re}\varphi(z_{n_{k}})}{{\rm Re}\varphi(z_{n_{k+1}})} \leq \frac{1+C_2}{2}<1.
$$

Therefore, the subsequence $\{z_{n_{k}}\}_{k=k_0}^{\infty}$ is as required. In particular,
$$
{\rm Re}\varphi(z_{n_{k+1}})>{\rm Re}\varphi(z_{n_k}).
$$
\end{proof}

Then, to prove our main theorem, it suffices to consider the Blaschke products in $\widetilde{\mathcal{H}}_{SC}$.

By the way, we could also find that fine sequences implies some other properties of Blaschke products.
More precisely, we could obtain that each $B\in \mathcal{H}_{SC}$ has a factor being an interpolating and one-component Blaschke product.

\begin{lemma}[Corollary 2.5 in \cite{Jo}]\label{separ}
Let $B$ be a Blaschke product whose zeros ${z_n}$ are contained in a Stolz domain and are separated. Suppose that $\rho(z_n,z_{n+1})\le\eta<1$. Then $B$ is a one-component inner function.
\end{lemma}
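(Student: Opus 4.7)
The plan is to prove the result in two movements: first, exhibit a large connected subset of a sublevel set of $|B|$ that threads through all the zeros; second, show that no other connected components of the sublevel set can exist.

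For the first movement, since the zeros $\{z_n\}$ lie in a Stolz domain and are separated, Tse's theorem recalled in the introduction upgrades separation to uniform separation, so $B$ is an interpolating Blaschke product with some constant $\delta_0>0$. The identity $|B(z)|=\prod_n\rho(z,z_n)$ gives immediately $|B(z)|\le\rho(z,z_n)$ for every $n$, so each pseudo-hyperbolic disk $D_{\rho}(z_n,\eta'):=\{z\in\mathbb{D}:\rho(z,z_n)<\eta'\}$ is contained in $\Omega_B(\eta')$. Choosing any $\eta'\in(\eta,1)$, the hypothesis $\rho(z_n,z_{n+1})\le\eta<\eta'$ forces $z_{n+1}\in D_{\rho}(z_n,\eta')$, so consecutive disks overlap and the chain $\bigcup_n D_{\rho}(z_n,\eta')$ is a connected subset of $\Omega_B(\eta')$ containing every zero of $B$.

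For the second movement, suppose $U$ is a connected component of $\Omega_B(\eta')$ containing no zero of $B$. Then $B$ is analytic and zero-free on $U$, so $\log|B|$ is harmonic there. On every compactly contained piece of $\partial U$ inside $\mathbb{D}$ we have $|B|=\eta'$, and I must show that the portion of $\partial U$ that meets $\partial\mathbb{D}$ also has $|B|\ge\eta'$ in a suitable boundary sense. Because the zeros lie in a Stolz domain based at some $\xi\in\partial\mathbb{D}$, the cluster set of $|B|$ on $\partial\mathbb{D}\setminus\{\xi\}$ consists of unimodular values, and an elementary estimate using $|B(z)|=\prod\rho(z,z_n)$ shows that $|B(z)|\to 1$ as $z\to\partial\mathbb{D}$ outside any fixed non-tangential enlargement of the Stolz region. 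Thus by the minimum modulus principle applied to $B$ on $U$ (after an exhaustion argument to push the boundary of $\mathbb{D}$ to where $|B|$ is already close to $1$), we would get $|B|\ge\eta'$ throughout $U$, contradicting $U\subset\Omega_B(\eta')$. Hence every component of $\Omega_B(\eta')$ contains a zero, and combined with the first step this forces $\Omega_B(\eta')$ to be connected.

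The main obstacle is the boundary behavior in the second movement: controlling $|B|$ along the parts of $\partial U$ that approach $\partial\mathbb{D}$, especially near the accumulation point $\xi$ of $\{z_n\}$. The Stolz confinement plus the interpolating factorization is exactly what rules out a component of $\Omega_B(\eta')$ approaching $\partial\mathbb{D}$ tangentially from outside the Stolz region; I expect the cleanest way to formalize this is via the well-known comparison, for interpolating Blaschke products, between $|B(z)|$ and $\prod_n\rho(z,z_n)$ uniformly on non-tangential approach regions disjoint from the Stolz cone, which gives $\liminf|B|\ge 1$ there and closes the argument.
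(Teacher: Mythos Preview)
The paper does not prove this lemma; it is quoted verbatim as Corollary~2.5 of Cima--Mortini \cite{Jo} and used as a black box in the proof of Theorem~\ref{one-c}. So there is no ``paper's own proof'' to compare against.

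That said, your sketch is along the right lines but the second movement is not yet a proof. The first movement is clean and correct: $|B(z)|=\prod_n\rho(z,z_n)\le\rho(z,z_k)$ for every $k$, so each $D_\rho(z_k,\eta')\subset\Omega_B(\eta')$, and the hypothesis $\rho(z_n,z_{n+1})\le\eta<\eta'$ chains the disks into one connected piece containing all zeros. In the second movement, the delicate point you flag---controlling $|B|$ on the part of $\partial U$ that approaches the accumulation point $\xi$---is exactly the substance of the Cima--Mortini argument, and your appeal to ``$|B(z)|\to 1$ outside a non-tangential enlargement of the Stolz cone'' is not enough as stated: a zero-free component $U$ of $\Omega_B(\eta')$ could in principle live \emph{inside} such an enlargement and approach $\xi$ non-tangentially, so you must also rule that out. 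The actual argument in \cite{Jo} uses a quantitative lower bound for $|B|$ on the complement of a union of pseudo-hyperbolic disks around the zeros (a standard interpolating-sequence estimate of Hoffman type), which gives $|B(z)|\ge c(\eta')>0$ whenever $\rho(z,z_n)\ge\eta'$ for all $n$; choosing the level appropriately then forces every point of $\Omega_B(\eta'')$ for suitable $\eta''$ to lie in one of the disks, hence in the chain. Your minimum-modulus route can be made to work, but it needs that same quantitative input to close the boundary argument near $\xi$.
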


\begin{theorem}\label{one-c}
Each $B\in\mathcal{H}_{SC}$ has an interpolating and one-component Blaschke product factor.
\end{theorem}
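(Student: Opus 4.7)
The plan is to apply Lemma \ref{finesub} to replace $B$ by a fine factor $\widetilde B$, then exhibit a suitable tail of $\widetilde B$ that is simultaneously interpolating and one-component. Let $\{w_k\}$ denote the fine zero sequence of $\widetilde B$; normalize so that $\xi=1$ and set $Z_k:=\varphi(w_k)$. Since any factor of $\widetilde B$ is a factor of $B$, it suffices to find an interpolating, one-component Blaschke product dividing $\widetilde B$, which I will take to be the tail $\widetilde B_0$ formed by $\{w_k\}_{k\geq K_0}$ for some large $K_0$.

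First, I would show that $\{w_k\}_{k\geq K_0}$ sits inside a fixed Stolz domain at $\xi=1$. The points $Z_k$ lie in the strip $SL(\pi/2-\theta_0,L_1,L_2)$ in the right half plane. Fine condition (4) gives $\mathrm{Re}\,Z_k\to+\infty$, so $Z_k$ escapes to $\infty$ along the strip direction $\alpha_0=\pi/2-\theta_0\in(-\pi/2,\pi/2)$, and therefore $\arg Z_k\to\alpha_0$. For any small $\gamma>0$ with $|\alpha_0|+\gamma<\pi/2$, there is $K_0$ such that $\arg Z_k\in(\alpha_0-\gamma,\alpha_0+\gamma)$ for all $k\geq K_0$; this cone is strictly inside the right half plane, and its preimage under $\varphi$ is a Stolz domain at $1$.

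Next, I would establish uniform separation of the tail via the half-plane identity
\[
\rho(w_m,w_k)\;=\;\frac{|Z_m-Z_k|}{|Z_m+\overline{Z_k}|},
\]
obtained by a direct substitution $w=\varphi^{-1}(Z)$. Fine condition (3) asserts $|1-w_{k+1}|/|1-w_k|\leq C_2<1$, which combined with $|Z_k|\sim 2/|1-w_k|$ yields $|Z_{k+1}|/|Z_k|\geq\mu_0>1$ for $k$ large. For $K_0\leq m<k$ this gives $|Z_m|/|Z_k|\leq\mu_0^{-(k-m)}\leq\mu_0^{-1}$, and hence
\[
\rho(w_m,w_k)\;\geq\;\frac{|Z_k|-|Z_m|}{|Z_k|+|Z_m|}\;\geq\;\frac{\mu_0-1}{\mu_0+1}\;>\;0,
\]
so $\{w_k\}_{k\geq K_0}$ is uniformly separated.

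By Tse's theorem (separated $\Leftrightarrow$ interpolating for sequences in a Stolz domain), $\widetilde B_0$ is interpolating; since additionally $\rho(w_k,w_{k+1})\leq\delta<1$ from fine condition (3), Lemma \ref{separ} shows $\widetilde B_0$ is one-component. As $\widetilde B_0$ divides $\widetilde B$ which divides $B$, we obtain the desired factor. The main subtlety is that the strip cone itself approaches $\xi$ along a single direction (opening between the two tangent arcs), and without the fine-subsequence mechanism one has no uniform lower bound $\rho(w_k,w_{k+1})\geq\varepsilon$; this lower bound is precisely what, via Lemma \ref{genlu}, produces the geometric decay $|1-w_{k+1}|/|1-w_k|\leq C_2<1$ that drives the separation estimate above.
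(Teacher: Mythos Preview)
Your proof is correct and follows essentially the same route as the paper: reduce via Lemma~\ref{finesub} to a fine factor, pass to the right half plane via $\varphi$, use the identity $\rho(w_m,w_k)=\bigl|(Z_m-Z_k)/(\overline{Z_m}+Z_k)\bigr|$ together with the geometric growth of $|Z_k|$ (coming from $|1-w_{k+1}|/|1-w_k|\le C_2<1$) to get a uniform lower bound on $\rho(w_m,w_k)$, then invoke Tse and Lemma~\ref{separ}. One terminological quibble: what you establish is that the tail is \emph{separated} (i.e.\ $\inf_{m\ne k}\rho(w_m,w_k)>0$), not ``uniformly separated'' in the Carleson sense; since you then correctly apply Tse's equivalence in a Stolz domain, the argument is unaffected.
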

\begin{proof}
By Lemma \ref{finesub}, it suffices to prove that each $B\in\mathcal{\widetilde{H}}_{SC}$ has an interpolating and one-component Blaschke product factor. Obviously, any fine sequence has a tail contained in some certain Stolz domain, and then a Blaschke product in $\mathcal{\widetilde{H}}_{SC}$ is interpolating if and only if its zeros sequence is separated, i.e.,
\begin{equation*}
\inf\limits_{m\neq n}\rho(z_n,z_m)>0.
\end{equation*}
Furthermore, together with Lemma \ref{separ}, we only need to prove any fine sequence has a separated subsequence.

Without loss of generality, assume $\{z_{n}\}_{n=1}^{\infty}$ is a fine sequence in a strip cone $SC(1,\theta, T_{1},T_{2})$.
Since $\varphi(z)=(1+z)/(1-z)$, we have $z=(\varphi(z)-1)/(\varphi(z)+1)$, and then
\begin{align*}
	{\vert}z_m-z_n{\vert}&={\big \vert}\frac{\varphi(z_m)-1}{\varphi(z_m)+1}-\frac{\varphi(z_n)-1}{\varphi(z_n)+1}{\big \vert}\\
	&={\big \vert}\frac{2(\varphi(z_m)-\varphi(z_n))}{(\varphi(z_m)+1)(\varphi(z_n)+1)}{\big \vert}
\end{align*}
and
\begin{align*}
	{\vert}1-\overline{z_m}z_n{\vert}&={\big \vert}1-\frac{\overline{\varphi(z_m)}-1}{\overline{\varphi(z_m)}+1}\cdot\frac{\varphi(z_n)-1}{\varphi(z_n)+1}{\big \vert}\\
	&={\big \vert}\frac{2(\overline{\varphi(z_m)}+\varphi(z_n))}{(\overline{\varphi(z_m)}+1)(\varphi(z_n)+1)}{\big \vert}\\
	&={\big \vert}\frac{2[\overline{\varphi(z_m)}+\varphi(z_n)]}{(\varphi(z_m)+1)(\varphi(z_n)+1)}{\big \vert}.
\end{align*}
Therefore,
\begin{align*}
	\rho(z_m,z_n)&={\big \vert}\frac{z_m-z_n}{1-\overline{z_m}z_n}{\big \vert}\\
	&={\big \vert}\frac{\varphi(z_m)-\varphi(z_n)}{\overline{\varphi(z_m)}+\varphi(z_n)}{\big \vert}\\
	&\ge{\big \vert}\frac{{\vert}\varphi(z_m){\vert}-{\vert}\varphi(z_n){\vert}}{{\vert}\varphi(z_m){\vert}+{\vert}\varphi(z_n){\vert}}{\big \vert}\\
	&={\big \vert}\frac{1-{\vert}\frac{\varphi(z_n)}{\varphi(z_m)}{\vert}}{1+{\vert}\frac{\varphi(z_n)}{\varphi(z_m)}{\vert}}{\big \vert}.\\
\end{align*}
In addition, there are  positive numbers $C_1$ and $C_2$ such that
\begin{equation*}
	0<C_{1}\le{\big \vert}\frac{1-z_{n_{k+1}}}{1-z_{n_k}}{\big \vert}\le C_2<1.
\end{equation*}
Since ${\vert}1-z_n{\vert}$ non-increasingly tends to $1$, as $n\to\infty$, there exists a positive integer $N\in\mathbb{N}$ such that for any $n\ge N$,
\begin{equation*}
	{\big \vert}\frac{1+z_n}{1+z_{n+1}}{\big \vert}\le\frac{1+C_2}{2C_2}.
\end{equation*}
Then, we have
\begin{align*}
	{\big \vert}\frac{\varphi(z_n)}{\varphi(z_{n+1})}{\big \vert}&={\big \vert}\frac{1+z_n}{1-z_n}\cdot\frac{1-z_{n+1}}{1+z_{n+1}}{\big \vert}\\
	&={\big \vert}\frac{1+z_n}{1+z_{n+1}}{\big \vert}{\big \vert}\frac{1-z_{n+1}}{1-z_n}{\big \vert}\\
	&\le\frac{C_2 +1}{2}\\
	&<1,
\end{align*}
Consequently, for any $m>n$,
\begin{align*}
{\big \vert}\frac{\varphi(z_n)}{\varphi(z_m)}{\big \vert}\le&\left(\frac{C_2 +1}{2}\right)^{m-n} \\
\le& \frac{C_2 +1}{2} \\
<&1.
\end{align*}
Then,
\begin{align*}
\rho(z_m,z_n)&\ge{\big \vert}\frac{1-{\vert}\frac{\varphi(z_n)}{\varphi(z_m)}{\vert}}{1+{\vert}\frac{\varphi(z_n)}{\varphi(z_m)}{\vert}}{\big \vert}\\
&=1-\frac{2}{\frac{1}{{\big \vert}\frac{\varphi(z_n)}{\varphi(z_m)}{\big \vert}}+1}\\
&\ge 1-\frac{2}{\frac{1}{\frac{C_2 +1}{2}}+1}\\
&=\frac{1-C_2}{3+C_2}.
\end{align*}
Therefore, we have
\begin{equation*}
\inf\limits_{m\neq n}\rho(z_n,z_m)\ge\frac{1-C_2}{3+C_2}>0.
\end{equation*}
This finishes the proof.	
\end{proof}

\section{Proof of the main theorem}

In this section, we would prove our main theorem.
Consider a Blaschke product
$$
B=\lambda z^m\prod^{\infty}_{n=1}\frac{\overline{z_{n}}}{\mid z_{n}\mid}\cdot\frac{z_{n}-z}{1-\overline{z_{n}}z}.
$$
where ${\vert}\lambda{\vert}=1$. It is well known that the finite Blaschke products with the same order are in the same component. Then, $B\sim zB$ if and only if for some certain $N\in\mathbb{N}$,
\begin{equation}\label{NtoN1}
\prod^{\infty}_{n=N}\frac{\overline{z_{n}}}{\mid z_{n}\mid}\cdot\frac{z_{n}-z}{1-\overline{z_{n}}z} \ \sim \ \prod^{\infty}_{n=N+1}\frac{\overline{z_{n}}}{\mid z_{n}\mid}\cdot\frac{z_{n}-z}{1-\overline{z_{n}}z}.
\end{equation}

Recall that $\varphi(z)=(1+z)/(1-z)$ is the fractional linear transformation from the unit open disk to the right half plane. Let $\alpha_{n}(t)$ be the unique point in $\mathbb{D}$ such that
$$
\varphi(\alpha_{n}(t))=(1-t)\varphi(z_{n})+t\varphi(z_{n+1}),
$$
for $t\in[0,1]$ and $n=1,2,\cdots$. Furthermore, following from the idea of Nestoridis \cite{Ne79}, define the map $B_t$ from $[0,1]$ to $\mathcal{F}$ by
\begin{equation}\label{path}
t \longmapsto B_{t}=\prod^{\infty}_{n=N}\frac{\overline{\alpha_{n}(t)}}{\mid\alpha_{n}(t)\mid}\cdot\frac{\alpha_{n}(t)-z}{1-\overline{\alpha_{n}(t)}z}.
\end{equation}
If the above map is continuous, then the relation (\ref{NtoN1}) holds and consequently $B\sim zB$. To prove the continuity of $B_t$, the following theorem plays an important role.

\begin{lemma}[Lemma $1$ in \cite{Ne79}]\label{Nest}
Let
\begin{equation*}
K_{1}=\prod^{\infty}_{n=1}\frac{\overline{\alpha_{n}}}{{\vert}\alpha_{n}{\vert}}\frac{\alpha_{n}-z}{1-\overline{\alpha_{n}}z}\ \ \ \text{and}\ \ \ K_{2}=\prod^{\infty}_{n=1}\frac{\overline{\beta_{n}}}{{\vert}\beta_{n}{\vert}}\frac{\beta_{n}-z}{1-\overline{\beta_{n}}z}
\end{equation*}
be two infinite Blaschke products such that $K_{1}(0)>0$ and $K_{2}(0)>0$, then we have the following inequality,
\begin{equation}
\begin{aligned}
{\Vert}K_{1}-K_{2}{\Vert}_{\infty}\le &\sum_{n}{\big \vert}\textrm{arg}\frac{\alpha_{n}}{\beta_{n}}{\big \vert}+2\sum_{n}{\big \vert}\textrm{arg}\frac{1-\alpha_{n}}{1-\beta_{n}}{\big \vert} \\
&+2\sup_{y\in \mathbb{R}}{\rm ess}\sum_{n}{\big \vert}\textrm{arg}\frac{\varphi(\alpha_{n})-{\bf i}y}{\varphi(\beta_{n})-{\bf i}y}{\big \vert}.
\end{aligned}
\end{equation}
\end{lemma}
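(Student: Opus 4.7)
The plan is to pass to the boundary of $\mathbb{D}$ and decompose the argument of each Blaschke factor into three pieces matching the three sums on the right-hand side. Since $K_{1}$ and $K_{2}$ are inner, $|K_{1}|=|K_{2}|=1$ almost everywhere on $\partial\mathbb{D}$, so the $H^{\infty}$-norm identity gives
\begin{equation*}
{\Vert}K_{1}-K_{2}{\Vert}_{\infty}=\sup_{\theta\in\mathbb{R}}{\rm ess}\,{\big \vert}\tfrac{K_{1}}{K_{2}}(e^{{\bf i}\theta})-1{\big \vert}\le\sup_{\theta\in\mathbb{R}}{\rm ess}\,{\big \vert}\arg\tfrac{K_{1}}{K_{2}}(e^{{\bf i}\theta}){\big \vert},
\end{equation*}
where I used the elementary estimate $|1-e^{{\bf i}\phi}|\le|\phi|$. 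The hypothesis $K_{j}(0)>0$ fixes a coherent branch of $\arg(K_{1}/K_{2})$: each factor $b_{\alpha}(z)=(\bar{\alpha}/|\alpha|)\cdot(\alpha-z)/(1-\bar{\alpha}z)$ satisfies $b_{\alpha}(0)=|\alpha|>0$, so $\arg(K_{1}/K_{2})$ may be written as $\sum_{n}[\arg b_{\alpha_{n}}-\arg b_{\beta_{n}}]$ with each summand the unique continuous branch on $\partial\mathbb{D}$ normalized to vanish at $z=0$.

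Next, I would compute $\arg b_{\alpha}$ on $\partial\mathbb{D}$ via the half-plane change of variable $z=\varphi^{-1}({\bf i}y)$, $y\in\mathbb{R}$, writing $a=\varphi(\alpha)$. A direct substitution yields
\begin{equation*}
b_{\alpha}(\varphi^{-1}({\bf i}y))=\frac{\bar{\alpha}}{|\alpha|}\cdot\frac{\bar{a}+1}{a+1}\cdot\frac{a-{\bf i}y}{\bar{a}+{\bf i}y}.
\end{equation*}
Since $\bar{a}+{\bf i}y=\overline{a-{\bf i}y}$, the last factor is unimodular with argument $2\arg(a-{\bf i}y)$; since $\overline{a+1}=\bar{a}+1$ and $a+1=2/(1-\alpha)$, the middle factor has argument $-2\arg(a+1)=2\arg(1-\alpha)$; and the leading factor contributes $-\arg\alpha$. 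Combining,
\begin{equation*}
\arg b_{\alpha}(\varphi^{-1}({\bf i}y))\equiv -\arg\alpha+2\arg(1-\alpha)+2\arg(\varphi(\alpha)-{\bf i}y)\pmod{2\pi}.
\end{equation*}

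Subtracting the analogous identity for $b_{\beta_{n}}$, summing over $n$, and applying the triangle inequality termwise gives
\begin{equation*}
{\big \vert}\arg\tfrac{K_{1}}{K_{2}}(\varphi^{-1}({\bf i}y)){\big \vert}\le\sum_{n}{\big \vert}\arg\tfrac{\alpha_{n}}{\beta_{n}}{\big \vert}+2\sum_{n}{\big \vert}\arg\tfrac{1-\alpha_{n}}{1-\beta_{n}}{\big \vert}+2\sum_{n}{\big \vert}\arg\tfrac{\varphi(\alpha_{n})-{\bf i}y}{\varphi(\beta_{n})-{\bf i}y}{\big \vert},
\end{equation*}
and taking the essential supremum over $y\in\mathbb{R}$ combined with the first step produces the stated bound. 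The main obstacle is the consistent handling of branches: each $\arg$ is only defined modulo $2\pi$, so the displayed pointwise identity must be interpreted as an equality of particular branches normalized at $z=0$. I would first verify everything for finite Blaschke products (where each $b_{\alpha}$ is continuous on $\partial\mathbb{D}$ and branch choices are unproblematic), and then pass to infinite products via uniform convergence on compacta, noting that the three sums on the right-hand side are exactly the majorant needed to guarantee absolute convergence of $\sum_{n}[\arg b_{\alpha_{n}}-\arg b_{\beta_{n}}]$ on $\partial\mathbb{D}$ whenever the bound is finite.
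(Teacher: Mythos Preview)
The paper does not give its own proof of this lemma: it is quoted verbatim as Lemma~1 of Nestoridis~\cite{Ne79} and used as a black box, so there is nothing in the present paper to compare your argument against.

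That said, your sketch is the standard route and is essentially correct. The computation of $b_{\alpha}\bigl(\varphi^{-1}({\bf i}y)\bigr)$ is right, and the three-term decomposition of the boundary argument is exactly what drives the estimate. Two small remarks on the branch issue you flag. First, the normalization ``vanishing at $z=0$'' is not quite the right way to phrase it for a single factor, since $\arg b_{\alpha}$ increases by $2\pi$ around $\partial\mathbb{D}$ and has no global continuous branch there; what \emph{does} have a continuous branch on $\partial\mathbb{D}$ is the quotient $b_{\alpha_n}/b_{\beta_n}$ (winding number zero), and it is this branch you should control. Second, the identity
\[
\arg\frac{b_{\alpha}}{b_{\beta}}\equiv-\arg\frac{\alpha}{\beta}+2\arg\frac{1-\alpha}{1-\beta}+2\arg\frac{\varphi(\alpha)-{\bf i}y}{\varphi(\beta)-{\bf i}y}\pmod{2\pi}
\]
becomes a genuine equality of continuous functions of $y$ once you take principal values on the right: each of the three ratio arguments lies in $(-\pi,\pi)$, and one checks equality at a single point (e.g.\ $y\to+\infty$, where the last term tends to $0$ and the left side tends to $-\arg(\alpha/\beta)+2\arg((1-\alpha)/(1-\beta))$). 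Your plan to verify first for finite products and then pass to the limit is exactly how Nestoridis organizes the argument.
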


Now, we would apply Lemma \ref{Nest} to verify the continuity of the path $B_t$. More precisely,
\begin{equation}\label{eq:1}
	\begin{aligned}
		{\Vert}B_{t}-B_{t+\Delta t}{\Vert}_{\infty}\le \ &\sum_{n}{\big \vert}\textrm{arg}\frac{\alpha_{n}(t)}{\alpha_{n}(t+\Delta t)}{\big \vert}+2\sum_{n}{\big \vert}\textrm{arg}\frac{1-\alpha_{n}(t)}{1-\alpha_{n}(t+\Delta t)}{\big \vert}\\
		&+2\sup_{y\in \mathbb{R}}\textrm{ess}\sum_{n}{\big \vert}\textrm{arg}\frac{\varphi(\alpha_{n}(t))-{\bf i}y}{\varphi(\alpha_{n}(t+\Delta t))-{\bf i}y}{\big \vert}.
	\end{aligned}
\end{equation}
Then, to prove $\lim\limits_{\Delta t \to 0}{\Vert}B_{t}-B_{t+\Delta t}{\Vert}=0$, it suffices to prove the three items in the right of the inequality (\ref{eq:1}) tend to $0$ as $\Delta t \to 0$.

\begin{proposition}\label{prop1}
For any $B\in \mathcal{\widetilde{H}}_{SC}$, there exists a positive integer $N_1\in\mathbb{N}$ and a positive number $K_1$ such that for any $t\in [0,1]$ and small positive number $\Delta t$,
\[
\sum\limits_{n=N_1}^{\infty}{\big \vert}\textrm{arg}\frac{\alpha_{n}(t)}{\alpha_{n}(t+\Delta t)}{\big \vert}
\le K_1\Delta t.
\]
\end{proposition}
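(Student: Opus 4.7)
The plan is to view $\alpha_n(t)$ as a smooth curve in $\mathbb{D}\setminus\{0\}$, to compute its logarithmic derivative explicitly, and then to exploit the geometric growth of $\mathrm{Re}\,\varphi(z_n)$ forced by the fine property. Assuming without loss of generality $\xi=1$ and that the zeros lie in $SC(1,\theta,T_1,T_2)$, the defining relation $\varphi(\alpha_n(t))=(1-t)\varphi(z_n)+t\varphi(z_{n+1})$ gives $\alpha_n(t)=(w(t)-1)/(w(t)+1)$ with $w(t)=(1-t)\varphi(z_n)+t\varphi(z_{n+1})$ and $w'(t)=\varphi(z_{n+1})-\varphi(z_n)$, so a direct differentiation yields
\[
\frac{\alpha_n'(t)}{\alpha_n(t)}=\frac{2\,w'(t)}{w(t)^2-1}.
\]
Pick $N_1$ so that $\mathrm{Re}\,\varphi(z_{N_1})\ge 2$; since $\mathrm{Re}\,\varphi(z_n)$ is strictly increasing and $w(t)$ lies on the segment $[\varphi(z_n),\varphi(z_{n+1})]$ in the strip $SL(\tfrac{\pi}{2}-\theta,L_1,L_2)$, we have $\mathrm{Re}\,w(t)\ge\mathrm{Re}\,\varphi(z_n)\ge 2$ for every $n\ge N_1$, so $\alpha_n$ avoids $0$ and a continuous branch of $\arg\alpha_n(t)$ is well defined on $[0,1]$. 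The mean-value theorem then gives
\[
\Bigl|\arg\frac{\alpha_n(t)}{\alpha_n(t+\Delta t)}\Bigr|\le\Delta t\cdot\sup_{s\in[t,t+\Delta t]}\Bigl|\frac{\alpha_n'(s)}{\alpha_n(s)}\Bigr|=\Delta t\cdot\sup_s\frac{2\,|\varphi(z_{n+1})-\varphi(z_n)|}{|w(s)^2-1|}.
\]

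Next I estimate the two factors. For the denominator, $|w(s)^2-1|\ge|w(s)|^2-1\ge(\mathrm{Re}\,\varphi(z_n))^2-1\ge\tfrac12(\mathrm{Re}\,\varphi(z_n))^2$ once $\mathrm{Re}\,\varphi(z_n)\ge 2$. For the numerator, the fact that every $\varphi(z_n)$ lies in the fixed strip of inclination $\tfrac\pi2-\theta$ forces $|\varphi(z_n)|\le C''\,\mathrm{Re}\,\varphi(z_n)$ for some constant $C''$ depending only on $\theta$ and the strip width (quantitatively $C''\to 1/\sin\theta$ as $n\to\infty$); combined with the fine estimate $\mathrm{Re}\,\varphi(z_{n+1})\le\widetilde{C_1}^{-1}\mathrm{Re}\,\varphi(z_n)$, this yields
\[
|\varphi(z_{n+1})-\varphi(z_n)|\le|\varphi(z_{n+1})|+|\varphi(z_n)|\le C_4\,\mathrm{Re}\,\varphi(z_n)
\]
for all $n\ge N_1$ (after possibly enlarging $N_1$), with $C_4$ depending only on $\theta$ and $\widetilde{C_1}$. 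Putting the two bounds together gives
\[
\Bigl|\arg\frac{\alpha_n(t)}{\alpha_n(t+\Delta t)}\Bigr|\le\frac{4C_4\,\Delta t}{\mathrm{Re}\,\varphi(z_n)}.
\]

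Finally, the fine property $\mathrm{Re}\,\varphi(z_n)/\mathrm{Re}\,\varphi(z_{n+1})\le\widetilde{C_2}<1$ forces $\mathrm{Re}\,\varphi(z_n)\ge\widetilde{C_2}^{-(n-N_1)}\mathrm{Re}\,\varphi(z_{N_1})$, so $\sum_{n\ge N_1}1/\mathrm{Re}\,\varphi(z_n)$ is a convergent geometric series; summing delivers the claimed bound with $K_1=4C_4/\bigl((1-\widetilde{C_2})\,\mathrm{Re}\,\varphi(z_{N_1})\bigr)$, uniformly in $t$. I expect the main obstacle to be the strip-geometry step $|\varphi(z_{n+1})-\varphi(z_n)|\lesssim\mathrm{Re}\,\varphi(z_n)$: one must carefully combine the angular convergence $\arg\varphi(z_n)\to\tfrac\pi2-\theta$ with the fine-sequence ratio bounds so as to preserve the full factor of $\mathrm{Re}\,\varphi(z_n)$, because without this saving $|\alpha_n'/\alpha_n|$ is no better than $O(1)$ and the sum fails to converge.
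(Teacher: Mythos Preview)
Your proof is correct. Both you and the paper exploit the Cayley parametrization $\alpha_n(t)=(w(t)-1)/(w(t)+1)$ and the growth of $\mathrm{Re}\,\varphi(z_n)$ guaranteed by the fine property, but the technical execution differs. The paper computes the finite difference $\alpha_n(t+\Delta t)-\alpha_n(t)$ directly, shows it is bounded by a constant times $\Delta t\cdot|1-z_n|$, and then invokes a triangle-area argument (together with an auxiliary lower bound $|\alpha_n(t)|\ge r$) to pass from $|\alpha_n(t+\Delta t)-\alpha_n(t)|$ to a bound on $\bigl|\arg(\alpha_n(t)/\alpha_n(t+\Delta t))\bigr|$; the summation then uses $\sum_n|1-z_n|<\infty$. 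You instead differentiate, bound $|\alpha_n'/\alpha_n|$ via $|w^2-1|\ge\tfrac12(\mathrm{Re}\,\varphi(z_n))^2$, apply the mean-value theorem to $\arg\alpha_n(t)$, and sum $1/\mathrm{Re}\,\varphi(z_n)$ as a geometric series from the fine bound $\mathrm{Re}\,\varphi(z_n)/\mathrm{Re}\,\varphi(z_{n+1})\le\widetilde{C_2}<1$. Since $|1-z_n|\asymp 1/\mathrm{Re}\,\varphi(z_n)$ for points in a strip cone, the two summations are equivalent; your route is slightly more streamlined in that the logarithmic derivative packages the passage to the argument bound in one step, while the paper's version makes the finite-difference estimate on $\alpha_n$ itself explicit before converting it to an angle.
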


\begin{proof}
Without loss of generality, suppose that the zeros sequence of $B\in \mathcal{\widetilde{H}}_{SC}$ is fine in a strip cone $SC(1,\theta, T_1, T_2)$.
By the definition of $\alpha_{n}(t)$, we have
\begin{align*}
		1-\alpha_{n}(t)=&1-\varphi^{-1}(\varphi(\alpha_{n}(t))) \\
		=&1-\frac{\varphi(\alpha_{n}(t))-1}{\varphi(\alpha_{n}(t))+1}   \\
		=&\frac{2}{\varphi(\alpha_{n}(t))+1}
	\end{align*}
Furthermore,
\begin{align*}
&\alpha_{n}(t+\Delta t)-\alpha_{n}(t) \\
=&(1-\alpha_{n}(t))-(1-\alpha_{n}(t+\Delta t)) \\
=&\frac{2\Delta t (\varphi(z_{n+1})-\varphi(z_{n}))}{(\varphi(\alpha_{n}(t))+1)(\varphi(\alpha_{n}(t+\Delta t))+1)}  \\
=&\frac{4\Delta t (z_{n+1}-z_{n})}{(1-z_{n})(1-z_{n+1})} \cdot
  \frac{1}{(\varphi(\alpha_{n}(t))+1)(\varphi(\alpha_{n}(t+\Delta t))+1)}.
\end{align*}

Since ${\rm Re}\varphi(z_{n})$ increasingly tends to $+\infty$ and
\[
\lim\limits_{k\rightarrow\infty}\frac{{\rm Re}\varphi(z_{n})}{{\vert}\varphi(z_{n}){\vert}}=\cos(\frac{\pi}{2}-\theta)>0,
\]
there is a positive number $R_1>0$ such that
\[
{\vert}\varphi(\alpha_{n}(t))+1{\vert}\ge {\vert}{\rm Re}(\varphi(\alpha_{n}(t))+1){\vert}\ge {\vert}{\rm Re}(\varphi(z_{n})+1){\vert}\ge \frac{{\vert}\varphi(z_{n})+1{\vert}}{R_1}.
\]

In addition, the followings hold.
\begin{enumerate}
\item[(1)] \ The sequence ${\vert}1-z_{n}{\vert}$ non-increasingly tends to $1$, as $n\rightarrow\infty$.
\item[(2)] \ by Lemma \ref{genlu}, there exists a positive constants $C_{1}$ such that
\begin{equation*}
0<C_{1}\le{\big \vert}\frac{1-z_{n+1}}{1-z_{n}}{\big \vert}.
\end{equation*}
\end{enumerate}
Then, we have
\begin{align*}
&{\vert}\alpha_{n}(t+\Delta t)-\alpha_{n}(t){\vert} \\
=&{\big \vert}\frac{4\Delta t (z_{n+1}-z_{n})}{(1-z_{n})(1-z_{n+1})} {\big \vert}\cdot
  {\big \vert}\frac{1}{(\varphi(\alpha_{n}(t))+1)(\varphi(\alpha_{n}(t+\Delta t))+1)} {\big \vert} \\
\le&\frac{4\Delta t ({\vert}1-z_{n+1}{\vert}+{\vert}1-z_{n}{\vert})}{{\vert}1-z_{n}{\vert}{\vert}1-z_{n+1}{\vert}} \cdot
  \frac{R_1^2}{{\vert}\varphi(z_{n})+1{\vert}^2} \\
\le&\frac{8\Delta t {\vert}1-z_{n}{\vert}}{{\vert}1-z_{n}{\vert}{\vert}1-z_{n+1}{\vert}} \cdot
  \frac{R_1^2}{{\vert}\frac{2}{1-z_n}{\vert}^2} \\
=&2R_1^2\cdot \Delta t \cdot \frac{{\vert}1-z_{n}{\vert}}{{\vert}1-z_{n+1}{\vert}} \cdot {\vert}{1-z_n}{\vert} \\
\le& \frac{2R_1^2}{C_1}\cdot{\vert}\Delta t{\vert}\cdot {\vert}{1-z_n}{\vert}.
\end{align*}

Given a positive number $0<r<1$, it follows from $z_{n}\rightarrow 1$ that there exists a positive integer $N_1\in \mathbb{N}$  such that, for any $n\ge N_1$ and any $t\in [0,1]$,
\[
{\vert}\alpha_{n}(t){\vert}\ge r.
\]
Thus, considering the area of the triangle with vertices $0$, $\alpha_{n}(t+\Delta t)$ and $\alpha_{n}(t)$, we have
\begin{align*}
&{\big \vert}\textrm{arg}\frac{\alpha_{n}(t)}{\alpha_{n}(t+\Delta t)}{\big \vert} \\
\le& \frac{\pi}{2}\cdot \sin{\big \vert}\textrm{arg}\frac{\alpha_{n}(t)}{\alpha_{n}(t+\Delta t)}{\big \vert} \\
\le& \frac{\pi}{2}\cdot \frac{{\vert}\alpha_{n}(t+\Delta t)-\alpha_{n}(t){\vert}\cdot 1}{{\vert}\alpha_{n}(t+\Delta t){\vert}\cdot{\vert}\alpha_{n}(t){\vert}} \\
\le& \frac{R_1^2\pi}{r^2C_1}\cdot \Delta t \cdot {\vert}{1-z_n}{\vert}.
\end{align*}
Consequently, by $\sum_{n}{\vert}{1-z_n}{\vert}<\infty$, for any $t\in[0,1]$,
\[
\sum\limits_{n=N_1}^{\infty}{\big \vert}\textrm{arg}\frac{\alpha_{n}(t)}{\alpha_{n}(t+\Delta t)}{\big \vert}
\le \Delta t \cdot \frac{R_1^2\pi}{r^2C_1}\sum\limits_{n=N_1}^{\infty}{\vert}{1-z_n}{\vert}.
\]
Moreover, write
\[
K_1=\frac{R_1^2\pi}{r^2C_1}\sum\limits_{n=1}^{\infty}{\vert}{1-z_n}{\vert}
\]
as required.
\end{proof}

\begin{proposition}\label{prop2}
For any $B\in \mathcal{\widetilde{H}}_{SC}$, there exists a positive integer $N_2\in\mathbb{N}$ and a positive number $K_2$ such that for any $t\in [0,1]$ and small positive number $\Delta t$,
\[
\sup\limits_{y\in \mathbb{R}}\textrm{ess}\sum\limits_{n=N_2}^{\infty}{\big \vert}\textrm{arg}\frac{\varphi(\alpha_{n}(t))-{\bf i}y}{\varphi(\alpha_{n}(t+\Delta t))-{\bf i}y}{\big \vert}\le K_2\Delta t.
\]
\end{proposition}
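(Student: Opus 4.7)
The plan is to mirror the proof of Proposition \ref{prop1}, now representing the angle difference as an integral in $s$ and using a key algebraic identity to handle the additional parameter $y$. Because $\varphi(\alpha_n(\cdot))$ is affine by construction, writing $d_n := \varphi(z_{n+1})-\varphi(z_n)$ and $w_n(s) := \varphi(\alpha_n(s)) - {\bf i}y$ one has $\tfrac{d}{ds}\arg w_n(s) = {\rm Im}(d_n/w_n(s))$, so
\[
\Bigl| \arg \frac{w_n(t)}{w_n(t+\Delta t)} \Bigr| \le \int_t^{t+\Delta t} \bigl| {\rm Im}(d_n/w_n(s)) \bigr| \, ds .
\]
Summing over $n\ge N_2$, swapping sum and integral by Tonelli, and pulling the supremum in $y$ inside the integral, the task reduces to proving
\[
\sup_{s\in[0,1],\, y\in\mathbb{R}}\; \sum_{n\ge N_2} \bigl| {\rm Im}(d_n/w_n(s)) \bigr| \le K
\]
for some constant $K$; the required bound then holds with $K_2 = K$.

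The decisive identity is
\[
{\rm Im}\bigl( d_n \, \overline{w_n(s)} \bigr) \;=\; D_p \, (y - y_n^{\ast}), \qquad y_n^{\ast} := \frac{q_n p_{n+1} - p_n q_{n+1}}{D_p},
\]
where $p_n+{\bf i}q_n = \varphi(z_n)$, $D_p=p_{n+1}-p_n$ and $D_q=q_{n+1}-q_n$. The right hand side is independent of $s$: the expression $D_q a_n(s) - D_p b_n(s)$ is affine in $s$ and, by a direct check, takes the same value $-D_p y_n^{\ast}$ at $s=0$ and $s=1$, so it is constant. Geometrically $y_n^{\ast}$ is the imaginary-axis intercept of the chord through $\varphi(z_n)$ and $\varphi(z_{n+1})$; because all vertices lie in the strip $SL(\tfrac{\pi}{2}-\theta, L_1, L_2)$ of bounded width and the fineness condition forces $p_n/D_p \le \widetilde{C_2}/(1-\widetilde{C_2})$, a short calculation gives $|y_n^{\ast}| \le M_0$ for some constant $M_0$ independent of $n$. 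Setting $T_n := D_q/D_p \to \tan(\tfrac{\pi}{2}-\theta)$ and using $b_n(s) - y_n^{\ast} = T_n a_n(s)$, the quantity to be bounded takes the clean form
\[
\sum_{n\ge N_2} \frac{D_p \, |y - y_n^{\ast}|}{a_n(s)^2 + (T_n a_n(s) - (y - y_n^{\ast}))^2} .
\]

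The main obstacle is the potentially unbounded numerator $|y - y_n^{\ast}|$: the naive estimate $|y|+M_0$ does not survive after summing. I would resolve this by a three-way case split according to the size of $a_n(s)$ relative to the critical scale $u_n^{\ast} := T_n (y - y_n^{\ast})/(1+T_n^2)$, at which the denominator attains its minimum value $(y - y_n^{\ast})^2/(1+T_n^2)$. When $p_n \ll u_n^{\ast}$ or $p_n \gg u_n^{\ast}$, the denominator gains a factor $(1+T_n^2)(a_n(s) - u_n^{\ast})^2$ that absorbs the numerator $|y - y_n^{\ast}|$, and the resulting contributions are summable either via $\sum D_p/p_n^2 < \infty$ (from the geometric decay $p_n \le \widetilde{C_2}\, p_{n+1}$) or via comparison with the elementary integral $\int dp/(p^2 + (T p - \tilde y)^2) = O(\min(1,1/|\tilde y|))$ uniformly in $\tilde y$; in the critical window $p_n \asymp u_n^{\ast}$ the double ratio bound $\widetilde{C_1} \le p_n/p_{n+1} \le \widetilde{C_2}$ restricts the index set to cardinality $O(1)$, each contributing an $O(1)$ quantity. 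Adding the three bounded pieces and taking the supremum over $s$ and $y$ yields the required uniform constant $K$.
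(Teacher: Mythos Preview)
Your argument is correct; the integral-in-$s$ representation, the identity ${\rm Im}(d_n\overline{w_n(s)})=D_p(y-y_n^{\ast})$, the bound $|y_n^{\ast}|\le M_0$ from the strip geometry combined with $p_n/D_p\le\widetilde{C_2}/(1-\widetilde{C_2})$, and the three-way split on $p_n$ versus $u_n^{\ast}$ all work as you describe. The paper arrives at the same conclusion by a more geometric route. Instead of differentiating in $s$, it bounds each angle via the triangle-area identity, obtaining $|d_n|\,h_n/(|w_n(t)|\,|w_n(t+\Delta t)|)$ with $h_n$ the distance from ${\bf i}y$ to the chord; this is your $D_p|y-y_n^{\ast}|/|w_n|^2$ in disguise, since $|d_n|h_n=D_p|y-y_n^{\ast}|$. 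For the $y$-uniformity the paper splits first on whether $\min\{|y-c_1|,|y-c_2|\}\le|c_1-c_2|$ (the small-$|y|$ regime, handled by $\sum|1-z_n|<\infty$), and in the large-$|y|$ regime splits the sum at the first index $N$ with ${\rm Re}\,\varphi(z_N)\ge\max\{|y-c_1|,|y-c_2|\}$, controlling the head $n<N$ by telescoping $\sum({\rm Re}\,\varphi(z_{n+1})-{\rm Re}\,\varphi(z_n))$ against the lower bound $|w_n|\ge\min\{|y-c_i|\}\sin\theta$, and the tail $n\ge N$ by the geometric series in $\widetilde{C_2}$. Your split at $p_n\asymp u_n^{\ast}$ is essentially this same cut (since $u_n^{\ast}\asymp|y|$ up to constants depending on $\theta$ and $M_0$), and your integral comparison $|u|\int dp/(p^2+(Tp-u)^2)\le\pi$ plays the role of the paper's telescoping step. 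The trade-off: your formulation is algebraically cleaner and makes the mechanism transparent, while the paper's version keeps everything in terms of the original geometric data $c_1,c_2,\theta$ and produces explicit constants.
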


\begin{proof}
Without loss of generality, suppose that the zeros sequence of $B\in \mathcal{\widetilde{H}}_{SC}$ is fine in a strip cone $SC(1,\theta, T_1, T_2)$.
Let the strip $SL(\frac{\pi}{2}-\theta,L_{1},L_{2})$ be the image of the strip cone $SC(1,\theta,T_{1},T_{2})$ under the map $\varphi(z)$, where
\[
L_{1}:y-\tan(\frac{\pi}{2}-\theta)\cdot x-c_1=0 \ \ \  \text{and} \ \ \ L_{2}:y-\tan(\frac{\pi}{2}-\theta)\cdot x-c_2=0.
\]

Denote by $L$ the straight line passing $\varphi(z_n)$ and parallel $L_1$, and denote by $\omega_n$ the angle between $L$ and the straight line passing through $\varphi(z_{n})$ and $\varphi(z_{n+1})$. Then
\[
\sin\omega_n\le \frac{{\vert}c_1-c_2{\vert}}{{\vert}\varphi(z_{n+1})-\varphi(z_n){\vert}}.
\]
Since there are  positive numbers $C_1$ and $C_2$ such that
\begin{equation*}
0<C_{1}\le{\big \vert}\frac{1-z_{n+1}}{1-z_{n}}{\big \vert}\le C_2<1,
\end{equation*}
and
\[
\lim\limits_{n\rightarrow\infty}{\big \vert}\frac{\varphi(z_{n})}{\varphi(z_{n+1})} {\big \vert} \bigg/  {\big \vert}\frac{1-z_{n+1}}{1-z_{n}} {\big \vert}
=\lim\limits_{k\rightarrow\infty}{\big \vert}\frac{1+z_{n}}{1+z_{n+1}} {\big \vert}=1,
\]
one can see that
\begin{align*}
\lim\limits_{n\rightarrow\infty}{\vert}\varphi(z_{n+1})-\varphi(z_n){\vert}=& \lim\limits_{n\rightarrow\infty}{\vert}\varphi(z_n){\vert}\left(\frac{{\vert}\varphi(z_{n+1}){\vert}}{{\vert}\varphi(z_n){\vert}}-1\right) \\
\le& \lim\limits_{n\rightarrow\infty}\left(\frac{1}{C_2}-1\right){\vert}\varphi(z_n){\vert} \\
=&\infty.
\end{align*}
Consequently,
\[
\lim\limits_{n\rightarrow\infty}\omega_n\le \frac{\pi}{2}\lim\limits_{n\rightarrow\infty}\sin\omega_n\le \frac{\pi}{2}\lim\limits_{n\rightarrow\infty}\frac{{\vert}c_1-c_2{\vert}}{{\vert}\varphi(z_{n+1})-\varphi(z_n){\vert}}=0,
\]
that is
\[
\lim\limits_{n\rightarrow\infty}\textrm{arg}(\varphi(z_{n+1})-\varphi(z_n))=\frac{\pi}{2}-\theta.
\]

Denote by $\vartheta_n$ the angle between the imaginary axis  and  the straight line passing through $\varphi(z_{n})$ and $\varphi(z_{n+1})$.
Then, for any $\epsilon>0$, there exists a positive integer $N_2\in\mathbb{N}$ such that for every $n\ge N_2$
\[
(1-\epsilon)\sin\theta\le \sin\vartheta_n \le(1+\epsilon)\sin\theta \ \ \ \text{and} \ \ \ \textrm{Re}z_n>0.
\]

Denote by $h_n$ the distance from $\mathbf{i}y$ to the straight line passing through $\varphi(\alpha_{n}(t))$ and $\varphi(\alpha_{n}(t+\Delta t))$. Consider the area of the triangle with vertices $\mathbf{i}y$, $\varphi(\alpha_{n}(t))$ and $\varphi(\alpha_{n}(t+\Delta t))$, one can see that
\begin{align*}
&\sin\left(\textrm{arg}\frac{\varphi(\alpha_{n}(t+\Delta t))-{\bf i}y}{\varphi(\alpha_{n}(t))-{\bf i}y}\right) \\
=&\frac{{\vert}\varphi(\alpha_{n}(t+\Delta t))-\varphi(\alpha_{n}(t)){\vert}\cdot h_n}{{\vert}\varphi(\alpha_{n}(t+\Delta t))-{\bf i}y{\vert}\cdot {\vert}\varphi(\alpha_{n}(t))-{\bf i}y{\vert}} \\
\le& \frac{\frac{\textrm{Re}\varphi(\alpha_{n}(t+\Delta t))-\textrm{Re}\varphi(\alpha_{n}(t))}{(1-\epsilon)\sin\theta}\cdot \max\{{\vert}y-c_1{\vert}, {\vert}y-c_2{\vert}\}(1+\epsilon)\sin\theta}{{\vert}\varphi(\alpha_{n}(t+\Delta t))-{\bf i}y{\vert}\cdot {\vert}\varphi(\alpha_{n}(t))-{\bf i}y{\vert}} \\
=& \Delta t\cdot\frac{1+\epsilon}{1-\epsilon}\cdot \frac{(\textrm{Re}\varphi(z_{n+1})-\textrm{Re}\varphi(z_{n}))\cdot \max\{{\vert}y-c_1{\vert}, {\vert}y-c_2{\vert}\}}{{\vert}\varphi(\alpha_{n}(t+\Delta t))-{\bf i}y{\vert}\cdot {\vert}\varphi(\alpha_{n}(t))-{\bf i}y{\vert}}.
\end{align*}
Since $\{\textrm{Re}\varphi(z_{n})\}$ is an increasing sequence of positive numbers tending to $+\infty$, we have
\begin{align*}
&{\big \vert}\textrm{arg}\frac{\varphi(\alpha_{n}(t))-{\bf i}y}{\varphi(\alpha_{n}(t+\Delta t))-{\bf i}y}{\big \vert} \\
\le & \frac{\pi}{2}\sin\left(\textrm{arg}\frac{\varphi(\alpha_{n}(t+\Delta t))-{\bf i}y}{\varphi(\alpha_{n}(t))-{\bf i}y}\right)\\
\le & \Delta t\cdot\frac{\pi(1+\epsilon)}{2(1-\epsilon)}\cdot \frac{(\textrm{Re}\varphi(z_{n+1})-\textrm{Re}\varphi(z_{n}))\cdot \max\{{\vert}y-c_1{\vert}, {\vert}y-c_2{\vert}\}}{{\vert}\varphi(\alpha_{n}(t+\Delta t))-{\bf i}y{\vert}\cdot {\vert}\varphi(\alpha_{n}(t))-{\bf i}y{\vert}}.
\end{align*}
Moreover,
\begin{align*}
&{\big \vert}\textrm{arg}\frac{\varphi(\alpha_{n}(t))-{\bf i}y}{\varphi(\alpha_{n}(t+\Delta t))-{\bf i}y}{\big \vert} \\
\le & \Delta t\cdot\frac{\pi(1+\epsilon)}{2(1-\epsilon)}\cdot \frac{(\textrm{Re}\varphi(z_{n+1})-\textrm{Re}\varphi(z_{n}))\cdot \max\{{\vert}y-c_1{\vert}, {\vert}y-c_2{\vert}\}}{{\vert}\textrm{Re}\varphi(z_{n}){\vert}\cdot {\vert}\textrm{Re}\varphi(z_{n}){\vert}} \\
= & \Delta t\cdot\frac{\pi(1+\epsilon)}{2(1-\epsilon)}\cdot \left(\frac{\textrm{Re}\varphi(z_{n+1})}{\textrm{Re}\varphi(z_{n})}-1 \right)\cdot\frac{\max\{{\vert}y-c_1{\vert}, {\vert}y-c_2{\vert}\}}{\textrm{Re}\varphi(z_{n})} \\
\le & \Delta t\cdot\frac{\pi(1+\epsilon)(1-\widetilde{C_1})}{2\widetilde{C_1}(1-\epsilon)}\cdot\frac{\max\{{\vert}y-c_1{\vert}, {\vert}y-c_2{\vert}\}}{\textrm{Re}\varphi(z_{n})}.
\end{align*}

$(1)$ \ Suppose that $y$ satisfies $\min\{{\vert}y-c_1{\vert}, {\vert}y-c_2{\vert}\}\le {\vert}c_1-c_2{\vert}$.

In this case,
\[
\max\{{\vert}y-c_1{\vert}, {\vert}y-c_2{\vert}\}\le \min\{{\vert}y-c_1{\vert}, {\vert}y-c_2{\vert}\}+{\vert}c_1-c_2{\vert} \le 2{\vert}c_1-c_2{\vert}.
\]

Then,
\begin{align*}
&\sum\limits_{n=N_2}^{\infty}{\big \vert}\textrm{arg}\frac{\varphi(\alpha_{n}(t))-{\bf i}y}{\varphi(\alpha_{n}(t+\Delta t))-{\bf i}y}{\big \vert} \\
\le & \sum\limits_{n=N_2}^{\infty}\Delta t\cdot\frac{\pi(1+\epsilon)(1-\widetilde{C_1})}{2\widetilde{C_1}(1-\epsilon)}\cdot\frac{2{\vert}c_1-c_2{\vert}}{\textrm{Re}\varphi(z_{n})}\\
\le & \Delta t\cdot\frac{\pi(1+\epsilon)(1-\widetilde{C_1}){\vert}c_1-c_2{\vert}}{\widetilde{C_1}(1-\epsilon)}\cdot\sum\limits_{n=N_2}^{\infty}\frac{1}{{\vert}\varphi(z_{n}){\vert}(1-\epsilon)\sin\theta} \\
= & \Delta t\cdot\frac{\pi(1+\epsilon)(1-\widetilde{C_1}){\vert}c_1-c_2{\vert}}{\widetilde{C_1}(1-\epsilon)^2\sin\theta}\cdot\sum\limits_{n=N_2}^{\infty}\frac{{\vert}1-z_{n}{\vert}}{{\vert}1+z_{n}{\vert}} \\
\le & \Delta t\cdot\frac{\pi(1+\epsilon)(1-\widetilde{C_1}){\vert}c_1-c_2{\vert}}{\widetilde{C_1}(1-\epsilon)^2\sin\theta}\cdot\sum\limits_{n=N_2}^{\infty}{\vert}1-z_{n}{\vert}.
\end{align*}

$(2)$ \ Suppose that $y$ satisfies $\min\{{\vert}y-c_1{\vert}, {\vert}y-c_2{\vert}\}\ge {\vert}c_1-c_2{\vert}$.

Let $N\ge N_2$ be the first positive such that
\[
\textrm{Re}\varphi(z_{N})\ge \max\{{\vert}y-c_1{\vert}, {\vert}y-c_2{\vert}\}.
\]
Then,
\begin{align*}
&\sum\limits_{n=N}^{\infty}{\big \vert}\textrm{arg}\frac{\varphi(\alpha_{n}(t))-{\bf i}y}{\varphi(\alpha_{n}(t+\Delta t))-{\bf i}y}{\big \vert} \\
\le & \sum\limits_{n=N}^{\infty}\Delta t\cdot\frac{\pi(1+\epsilon)(1-\widetilde{C_1})}{\widetilde{C_1}(1-\epsilon)}\cdot\frac{\max\{{\vert}y-c_1{\vert}, {\vert}y-c_2{\vert}\}}{\textrm{Re}\varphi(z_{n})}\\
= & \Delta t\cdot\frac{\pi(1+\epsilon)(1-\widetilde{C_1})}{\widetilde{C_1}(1-\epsilon)}\cdot\frac{\max\{{\vert}y-c_1{\vert}, {\vert}y-c_2{\vert}\}}{\textrm{Re}\varphi(z_{N})} \cdot\sum\limits_{n=N}^{\infty}\frac{\textrm{Re}\varphi(z_{n})}{\textrm{Re}\varphi(z_{N})}  \\
\le & \Delta t\cdot\frac{\pi(1+\epsilon)(1-\widetilde{C_1})}{\widetilde{C_1}(1-\epsilon)} \cdot\sum\limits_{k=0}^{\infty}\widetilde{C_2}^k \\
= & \Delta t\cdot\frac{\pi(1+\epsilon)(1-\widetilde{C_1})}{\widetilde{C_1}(1-\epsilon)(1-\widetilde{C_2})}.
\end{align*}
Notice that for $t\in[0,1]$
\[
{\vert}\varphi(\alpha_{n}(t))-{\bf i}y{\vert}\ge \min\{{\vert}y-c_1{\vert}, {\vert}y-c_2{\vert}\}\sin\theta \ge {\vert}c_1-c_2{\vert}\sin\theta.
\]
It easy to see that
\begin{align*}
\frac{\max\{{\vert}y-c_1{\vert}, {\vert}y-c_2{\vert}\}}{{\vert}\varphi(\alpha_{n}(t))-{\bf i}y{\vert}}\le& \frac{\min\{{\vert}y-c_1{\vert}, {\vert}y-c_2{\vert}\}+{\vert}c_1-c_2{\vert}}{\min\{{\vert}y-c_1{\vert}, {\vert}y-c_2{\vert}\}\sin\theta} \\
=& \left(1+\frac{{\vert}c_1-c_2{\vert}}{\min\{{\vert}y-c_1{\vert}, {\vert}y-c_2{\vert}\}}\right)\cdot\frac{1}{\sin\theta} \\
\le& \frac{2}{\sin\theta}.
\end{align*}
Then, for any $N_2\le n \le N-1$,
\begin{align*}
&\sum\limits_{n=N_2}^{N-1}{\big \vert}\textrm{arg}\frac{\varphi(\alpha_{n}(t))-{\bf i}y}{\varphi(\alpha_{n}(t+\Delta t))-{\bf i}y}{\big \vert} \\
\le & \sum\limits_{n=N_2}^{N-1}\Delta t\cdot\frac{\pi(1+\epsilon)}{2(1-\epsilon)}\cdot \frac{(\textrm{Re}\varphi(z_{n+1})-\textrm{Re}\varphi(z_{n}))\cdot \max\{{\vert}y-c_1{\vert}, {\vert}y-c_2{\vert}\}}{{\vert}\varphi(\alpha_{n}(t+\Delta t))-{\bf i}y{\vert}\cdot {\vert}\varphi(\alpha_{n}(t))-{\bf i}y{\vert}}\\
\le & \Delta t\cdot\frac{\pi(1+\epsilon)}{2(1-\epsilon)}\cdot \sum\limits_{n=N_2}^{N-1}\frac{(\textrm{Re}\varphi(z_{n+1})-\textrm{Re}\varphi(z_{n}))\cdot \max\{{\vert}y-c_1{\vert}, {\vert}y-c_2{\vert}\}}{\min\{{\vert}y-c_1{\vert}, {\vert}y-c_2{\vert}\}\sin\theta\cdot {\vert}\varphi(\alpha_{n}(t))-{\bf i}y{\vert}}\\
\le & \Delta t\cdot\frac{\pi(1+\epsilon)}{(1-\epsilon)\min\{{\vert}y-c_1{\vert}, {\vert}y-c_2{\vert}\}\sin^2\theta}\cdot \sum\limits_{n=N_2}^{N-1}(\textrm{Re}\varphi(z_{n+1})-\textrm{Re}\varphi(z_{n}))\\
=& \Delta t\cdot\frac{\pi(1+\epsilon)}{(1-\epsilon)\sin^2\theta}\cdot \frac{\textrm{Re}\varphi(z_{N})-\textrm{Re}\varphi(z_{N_2})}{\min\{{\vert}y-c_1{\vert}, {\vert}y-c_2{\vert}\}}\\
\le& \Delta t\cdot\frac{\pi(1+\epsilon)}{(1-\epsilon)\sin^2\theta}\cdot \frac{\frac{\textrm{Re}\varphi(z_{N})}{\textrm{Re}\varphi(z_{N-1})}\cdot \textrm{Re}\varphi(z_{N-1})}{\min\{{\vert}y-c_1{\vert}, {\vert}y-c_2{\vert}\}}\\
\le& \Delta t\cdot\frac{\pi(1+\epsilon)}{(1-\epsilon)\sin^2\theta}\cdot \frac{\frac{1}{\widetilde{C_1}}\max\{{\vert}y-c_1{\vert}, {\vert}y-c_2{\vert}\}}{\min\{{\vert}y-c_1{\vert}, {\vert}y-c_2{\vert}\}} \\
=& \Delta t\cdot\frac{2\pi(1+\epsilon)}{(1-\epsilon)\widetilde{C_1}\sin^2\theta}.
\end{align*}
Thus, in this case,
\begin{align*}
&\sum\limits_{n=N_2}^{\infty}{\big \vert}\textrm{arg}\frac{\varphi(\alpha_{n}(t))-{\bf i}y}{\varphi(\alpha_{n}(t+\Delta t))-{\bf i}y}{\big \vert} \\
\le& \Delta t\cdot\left(\frac{2\pi(1+\epsilon)}{(1-\epsilon)\widetilde{C_1}\sin^2\theta}+\frac{\pi(1+\epsilon)(1-\widetilde{C_1})}{\widetilde{C_1}(1-\epsilon)(1-\widetilde{C_2})}\right).
\end{align*}
Therefore, we could write
\[
K_2=\left\{\begin{matrix}
\frac{\pi(1+\epsilon)(1-\widetilde{C_1}){\vert}c_1-c_2{\vert}}{\widetilde{C_1}(1-\epsilon)^2\sin\theta}\cdot\sum\limits_{n=N_2}^{\infty}{\vert}1-z_{n}{\vert}, \\ \frac{2\pi(1+\epsilon)}{(1-\epsilon)\widetilde{C_1}\sin^2\theta}+\frac{\pi(1+\epsilon)(1-\widetilde{C_1})}{\widetilde{C_1}(1-\epsilon)(1-\widetilde{C_2})}
\end{matrix}
\right\}
\]
as required.
\end{proof}

\begin{proposition}\label{prop3}
For any $B\in \mathcal{\widetilde{H}}_{SC}$, there exists a positive integer $N_3\in\mathbb{N}$ and a positive number $K_3$ such that for any $t\in [0,1]$ and small positive number $\Delta t$,
\[
\sum_{n}{\big \vert}\textrm{arg}\frac{1-\alpha_{n}(t)}{1-\alpha_{n}(t+\Delta t)}{\big \vert}\le K_3\Delta t.
\]
\end{proposition}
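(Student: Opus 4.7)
The plan is to exploit the identity $1-\alpha_n(t) = 2/(\varphi(\alpha_n(t))+1)$, which converts the target quantity into $|\textrm{arg}\,(\varphi(\alpha_n(t+\Delta t))+1)/(\varphi(\alpha_n(t))+1)|$. Since $\varphi(\alpha_n(s))$ lies in the right half plane for every $s \in [0,1]$, the point $\varphi(\alpha_n(s))+1$ has real part greater than $1$, and therefore admits a continuous holomorphic logarithm along the segment from $t$ to $t+\Delta t$. Writing $p_n = \varphi(z_n)+1$, $q_n = \varphi(z_{n+1})+1$, $u_n(s) = \varphi(\alpha_n(s))+1 = (1-s)p_n + s\,q_n$, and $v_n = q_n - p_n$, direct differentiation yields
\[
\frac{d}{ds}\textrm{arg}\,u_n(s) = \textrm{Im}\frac{v_n}{u_n(s)} = \frac{\textrm{Im}(\bar u_n(s)\,v_n)}{|u_n(s)|^2}.
\]

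The key algebraic observation is that $\textrm{Im}(\bar u_n(s)\,v_n) = \textrm{Im}(\bar p_n q_n)$; expanding, the $s$-dependent pieces cancel because $|p_n|^2$ and $|q_n|^2$ are real and $\textrm{Im}(\bar q_n p_n) = -\textrm{Im}(\bar p_n q_n)$. Combined with the trivial lower bound $|u_n(s)| \ge \textrm{Re}\,u_n(s) \ge \textrm{Re}\,\varphi(z_n)+1$ valid for all $s \in [0,1]$, integrating the derivative over $[t, t+\Delta t]$ gives
\[
\left|\textrm{arg}\frac{1-\alpha_n(t)}{1-\alpha_n(t+\Delta t)}\right| \le \Delta t \cdot \frac{|\textrm{Im}(\bar p_n q_n)|}{(\textrm{Re}\,\varphi(z_n)+1)^2}.
\]

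Next, using the strip-cone geometry I would parametrize $\varphi(z_n) = x_n + {\bf i}(x_n\cot\theta + d_n)$ with $x_n = \textrm{Re}\,\varphi(z_n) \to +\infty$ and $d_n \in [c_1, c_2]$. A straightforward expansion gives the explicit formula
\[
\textrm{Im}(\bar p_n q_n) = \cot\theta\,(x_{n+1}-x_n) + (x_n+1)d_{n+1} - (x_{n+1}+1)d_n,
\]
whence $|\textrm{Im}(\bar p_n q_n)| \le \cot\theta\,(x_{n+1}-x_n) + \max(|c_1|,|c_2|)(x_n+x_{n+1}+2)$. By condition (4) in Definition \ref{fine}, $\widetilde{C_1} \le x_n/x_{n+1} \le \widetilde{C_2} < 1$, so $x_{n+1} \le x_n/\widetilde{C_1}$ and $x_n$ grows at least geometrically, giving $\sum 1/x_n < \infty$. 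Dividing the above bound by $(x_n+1)^2$ and using $x_{n+1} \le x_n/\widetilde{C_1}$ shows $|\textrm{Im}(\bar p_n q_n)|/(x_n+1)^2 = O(1/x_n)$, which is summable. Choosing $N_3$ sufficiently large and setting $K_3 = \sum_{n \ge N_3} |\textrm{Im}(\bar p_n q_n)|/(x_n+1)^2$ finishes the proof.

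The main obstacle is the geometric estimate on $|\textrm{Im}(\bar p_n q_n)|$: a priori the terms $(x_n+1)d_{n+1}$ and $(x_{n+1}+1)d_n$ grow linearly in $x_n$, and one must verify that after dividing by $(x_n+1)^2$ the remaining factor still decays as $1/x_n$. This decay is guaranteed precisely by the uniform bound $d_n \in [c_1,c_2]$ coming from the strip-cone hypothesis, together with the geometric growth of $x_n$ from Definition \ref{fine}. The rest is a direct adaptation of the integrated-argument scheme used in the proofs of Propositions \ref{prop1} and \ref{prop2}.
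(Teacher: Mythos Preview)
Your proof is correct, but it follows a different path from the paper's. Both arguments begin with the same identity $1-\alpha_n(t)=2/(\varphi(\alpha_n(t))+1)$, reducing the problem to bounding $\big|\arg\frac{\varphi(\alpha_n(t+\Delta t))+1}{\varphi(\alpha_n(t))+1}\big|$. From there the paper simply observes that setting $\varphi(\widehat{z_n})=\varphi(z_n)+1$ produces a translated sequence lying in a slightly wider strip $SL(\frac{\pi}{2}-\theta,L_1,\widehat{L_2})$ which still satisfies all the growth and angle conditions needed for Proposition~\ref{prop2}; applying that proposition at $y=0$ yields the bound immediately. Your route instead differentiates $\arg u_n(s)$, uses the pleasant algebraic identity $\mathrm{Im}(\bar u_n(s)v_n)=\mathrm{Im}(\bar p_n q_n)$ to make the numerator $s$-independent, and then estimates the resulting sum by hand using the strip parametrization. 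The paper's reduction is shorter and makes the structural parallel between Propositions~\ref{prop2} and~\ref{prop3} explicit; your approach is more self-contained and avoids re-checking that the translated sequence inherits the fine-sequence estimates. One small cosmetic point: in your bound $|\mathrm{Im}(\bar p_n q_n)|\le \cot\theta\,(x_{n+1}-x_n)+\cdots$ the factor $\cot\theta$ should be $|\cot\theta|$, since $\theta\in(0,\pi)$ allows $\cot\theta<0$; this does not affect the argument.
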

\begin{proof}
Without loss of generality, suppose that the zeros sequence of $B\in \mathcal{\widetilde{H}}_{SC}$ is fine in a strip cone $SC(1,\theta, T_1, T_2)$.
Let the strip $SL(\frac{\pi}{2}-\theta,L_{1},L_{2})$ be the image of $SC(1,\theta,T_{1},T_{2})$ under the map $\varphi(z)$, where
\[
L_{1}:y-\tan(\frac{\pi}{2}-\theta)\cdot x-c_1=0 \ \ \  \text{and} \ \ \ L_{2}:y-\tan(\frac{\pi}{2}-\theta)\cdot x-c_2=0.
\]
For convenience, assume the line $L_2$ is on the right of the line $L_1$. By translating $L_{2}$ one unit to the right, we obtain a new line $\widehat{L_2}$, more precisely,
\[
\widehat{L_{2}}: \ y-\tan(\frac{\pi}{2}-\theta)\cdot (x-1)-c_2=0.
\]
Let
\[
\varphi(\widehat{z_n})=\varphi(z_{n})+1 \ \ \text{and} \ \ \varphi(\widehat{\alpha}_{n}(t))=\varphi(\alpha_{n}(t))+1.
\]
It is not difficult to see that $\{\varphi(\widehat{z_n})\}_{n=1}^{\infty}$ also satisfies the following conditions as well as a fine sequence.
\begin{enumerate}
\item[(1)] \ The sequence $\{\varphi(\widehat{z_n})\}_{n=1}^{\infty}$ lies in the strip $SL(\frac{\pi}{2}-\theta,L_{1},\widehat{L_{2}})$.
\item[(2)] \ $\lim\limits_{n\rightarrow\infty}\textrm{arg}(\varphi(z_{n+1})-\varphi(z_n))=\frac{\pi}{2}-\theta$.
\item[(3)] \ ${\rm Re} \varphi(\widehat{z_n})$ monotonically tends to $+\infty$, and there are two positive numbers $\widetilde{D_1}$ and $\widetilde{D_2}$ such that
\[
0<\widetilde{D_1}\leq \frac{{\rm Re}\varphi(\widehat{z_n})}{{\rm Re}\varphi(\widehat{z_{n+1}})} \leq \widetilde{D_2}<1.
\]
\end{enumerate}
Then, the Proposition \ref{prop2} also holds for the sequence $\{\varphi(\widehat{z_n})\}_{n=1}^{\infty}$. That is,  there exists a positive integer $N_3\in\mathbb{N}$ and a positive number $K_3$ such that for any $t\in [0,1]$ and small positive number $\Delta t$,
\[
\sup\limits_{y\in \mathbb{R}}\textrm{ess}\sum\limits_{n=N_3}^{\infty}{\big \vert}\textrm{arg}\frac{\varphi(\widehat{\alpha}_{n}(t))-{\bf i}y}{\varphi(\widehat{\alpha}_{n}(t+\Delta t))-{\bf i}y}{\big \vert}\le K_3\Delta t.
\]
In particular, the above inequality holds for $y=0$, and hence
\begin{equation*}
\begin{aligned}
K_3\Delta t \ge & \sum\limits_{n=N_3}^{\infty}{\big \vert}\textrm{arg}\frac{\varphi(\widehat{\alpha}_{n}(t))}{\varphi(\widehat{\alpha}_{n}(t+\Delta t))}{\big \vert} \\
=&\sum\limits_{n=N_3}^{\infty}{\big \vert}\textrm{arg}\frac{\varphi(\alpha_{n}(t))+1}{\varphi(\alpha_{n}(t+\Delta t))+1}{\big \vert}\\
=&\sum\limits_{n=N_3}^{\infty}{\big \vert}\textrm{arg}\frac{\frac{1+\alpha_{n}(t)}{1-\alpha_{n}(t)}+1}{\frac{1+\alpha_{n}(t+\Delta t)}{1-\alpha_{n}(t+\Delta t)}+1}{\big \vert}\\
=&\sum\limits_{n=N_3}^{\infty}{\big \vert}\textrm{arg}\frac{1-\alpha_{n}(t+\Delta t)}{1-\alpha_{n}(t)}{\big \vert}.
\end{aligned}
\end{equation*}
This completes the proof.
\end{proof}

{\bf Proof  of  Main Theorem :}  Given any $B\in\mathcal{{H}}_{SC}$. Without loss of generality, we may assume that its zeros lie in a strip cone $SC(1,\theta,T_{1},T_{2})$, $\theta\in(0, \pi)$. By Lemma \ref{finesub}, it has a factor
$\widetilde{B}\in\mathcal{\widetilde{H}_{SC}}$, denoted by
\[
\widetilde{B}(z)=\prod\limits_{n=1}^{\infty}\frac{{\vert}z_{n}{\vert}}{z_{n}}\frac{z_{n}-z}{1-\overline{z_{n}}z}.
\]
Following from Proposition \ref{prop1}, Proposition \ref{prop2}, Proposition \ref{prop3} and Lemma \ref{Nest}, we could choose $N\ge \max\{N_1, N_2, N_3\}$ and then there is a continuous path from
\[
\prod\limits_{n=N}^{\infty}\frac{{\vert}z_{n}{\vert}}{z_{n}}\frac{z_{n}-z}{1-\overline{z_{n}}z}\ \ \ \text{to} \ \ \ \prod\limits_{n=N+1}^{\infty}\frac{{\vert}z_{n}{\vert}}{z_{n}}\frac{z_{n}-z}{1-\overline{z_{n}}z}.
\]
Therefore, by the path-connectedness of M\"{o}bius transformations and Lemma \ref{factorconnect},
we have $B\sim zB$.

\section*{Declarations}
\begin{itemize}
\item Ethics approval

\noindent Not applicable.

\item Competing interests

\noindent The author declares that there are no conflict of interest or competing interests.

\item Authors' contributions

\noindent All authors reviewed this paper.

\item Funding

\noindent There is no funding source for this manuscript.

\item Availability of data and materials

\noindent Data sharing is not applicable to this article as no datasets were generated or analyzed during the current study.
\end{itemize}

\end{document}